\numberwithin{equation}{section}
\newtheorem{theorem}{Theorem}[section]
\newtheorem{lemma}[theorem]{Lemma}
\newtheorem{corollary}[theorem]{Corollary}
\newtheorem{proposition}[theorem]{Proposition}
\newtheorem{remark}[theorem]{Remark}
\def\eI{[0,1]}
\newcommand{\Dom}{{\it{Dom}}}
\newcommand{\R}{{\mathbb{R}}}
\newcommand{\G}{{\mathcal{G}}}
\newcommand{\E}{{\mathcal{E}}}
\newcommand{\C}{{\mathcal{C}}}
\newcommand{\Pe}{{\mathcal{P}}}
\DeclareSymbolFont{AMSb}{U}{msb}{m}{n}
\newcommand{\N}{{\mathbb{N}}}
\newcommand{\Q}{{\mathbb{Q}}}
\newcommand{\Pp}{{\mathbb{P}}}
\newcommand{\D}{{\mathbb{D}}}
\newcommand{\EE}{{\mathbb{E}}}
\newcommand{\leb}{{\mbox{Leb}}}
\newcommand{\Cyl}{{\mathfrak{C}}}
\newcommand{\Zyl}{{\mathfrak{Z}}}
\def\smint{{\mbox{$\int$}}}
\newcommand{\I}{{\mathcal I}}
\newcommand{\J}{{\mathcal J}}
\newcommand{\HH}{{\mathcal H}}
\begin{document}

\title{A Monotone Approximation to the Wasserstein Diffusion}

\author{Karl-Theodor Sturm}

\date{}
\maketitle

\begin{section}{Introduction and Statement of the Main Results}

The Wasserstein space $\Pe(M)$ on an Euclidean or Riemannian space $M$ -- i.e. the space of probability measures on $M$ equipped with the $L^2$-Wasserstein distance $d_W$ -- offers a rich geometric structure. This allows to develop a far reaching {\em first order calculus},  with striking applications for instance to the reformulation of conservative PDEs on $M$ as gradient flows of suitable functionals on $\Pe(M)$, see e.g. \cite{Otto}, \cite{Villani}, \cite{AGS}.
A {\em second order calculus} was developed in \cite{renstu} in the particular case of a one-dimensional state space, say $M=[0,1]$, based on the construction of a canonical Dirichlet form
\begin{equation}\label{Wasserstein-P}
\EE_\Pe(u,v)=\int_{\Pe} \langle D u(\mu), D v(\mu)\rangle_{L^2(\mu)}^2 d\Pp^\beta(\mu)
\end{equation}
with domain $\D_\Pe\subset L^2(\Pe,\Pp^\beta)$.
Here $Du$ denotes the {\em Wasserstein gradient} and $\Pp^\beta$ a suitable measure ({\em "entropic measure"}). Among others, this leads to a canonical second order differential operator and to a canonical continuous Markov process $(\mu_t)_{t\ge0}$, called {\em Wasserstein diffusion}.

The goal of this paper is to derive approximations of these objects -- Dirichlet form, semigroup, continuous Markov process -- on the infinite dimensional space $\Pe:=\Pe([0,1])$ in terms of appropriate objects on finite dimensional spaces. In particular, we will approximate the Wasserstein diffusion in terms of interacting systems of Brownian motions.

\bigskip\bigskip

For each $k\in\N$ we consider the strongly local, regular Dirichlet form $(\mathcal E_k,\mathcal D_k)$ on $L^2(\R^k,\rho^\beta_k\,dx)$ defined on its core
$\mathcal C^1(\mathbb R^k)$ by

\begin{equation}\mathcal{E}_k(U,V)=k\int_{{\mathbb R}^k}\nabla U(x)\cdot\nabla V(x)\, \rho^\beta_k(x)\,dx.\end{equation}

The density
\begin{eqnarray*}
\lefteqn{\rho_\beta^k(x_1,\ldots,x_k)}\\
&=&\frac{\Gamma(\beta)e^\beta\beta^k}{[k\Gamma(\beta/k)]^k}
\int_{x_{k-1}}^{x_k}\ldots\int_{x_1}^{x_2}\prod_{i=1}^k
\left[
 \int_0^{\frac{x_i-y_{i-1}}{y_i-y_{i-1}}}\left(\frac{x_i-y_{i-1}}{y_i-y_{i-1}}-z_i\right)^{\beta/k-1}\cdot z_i^{- z_i\beta/k} \cdot (1-z_i)^{-(1-z_i)\beta/k} \cdot\right.\\
 &&\qquad\qquad
 \left.\cdot\left(y_i-y_{i-1}\right)^{\beta/k-2}\cdot\left(\cos(\pi z_i\beta/k)-\frac1\pi\sin(\pi z_i\beta/k)\cdot\log\frac{z_i}{1-z_i}\right)\, dz_i
\right]dy_1\ldots dy_{k-1}
\end{eqnarray*}
(where $y_0:=0, y_k:=1$)
is continuous, positive and bounded from above by
$$C\cdot
\left[x_1(1-x_k)\right]^{\beta/(2k)-1}\cdot\prod_{i=2}^{k}\left(x_i-x_{i-1}\right)^{\beta/k-1}$$
 on the simplex
$\Sigma_k:=\left\{(x_1,\ldots,x_k): 0<x_1<\ldots<x_k<1\right\}\subset\R^k$
and vanishes on $\R^k\setminus \Sigma_k$.

The strong Markov process $(X^k_t)_{t\ge0}=\left(X^{k,1}_t,\ldots,X^{k,k}_t\right)_{t\ge0}$ associated with the Dirichlet form $(\mathcal E_k,\mathcal D_k)$ is continuous, reversible and recurrent. At least on those stochastic intervals for which $X^k_t(\omega)\in\Sigma_k$ it can be characterized as the solution to an interacting system of stochastic differential equations
\begin{equation}
dX_t^{k,i}=k\frac{\partial \log\rho_k^\beta}{\partial x_i}\left(X_t^k \right)dt+\sqrt{2k}\,dW_t^i,\quad i=1,\ldots,k
\end{equation}
for some $k$-dimensional Brownian motion $(W_t)_{t\ge 0}$.

\bigskip\bigskip

In many respects, an alternative representation for (\ref{Wasserstein-P}) is be more convenient.
The map
$\chi: \  g\mapsto
g_*\leb|_{[0,1]}$  establishes an isometry between the set $\G$ of right continuous increasing functions
$g:[0,1)\to[0,1]$ and
$\Pe$. Here $\G$ will be regarded as a convex subset of the Hilbert space $L^2([0,1],\leb)$.
The image of the form (\ref{Wasserstein-P}) under the map $\chi^{-1}:\Pe\to\G$ is given by the form $(\EE,\D)$ on $L^2(\G,\Q^\beta)$ with
\begin{equation}\label{Wasserstein-G}
\EE(u,v)=\int_{\G} \langle{\mathbf D} u(g),  {\mathbf D} v(g)\rangle \, d\Q^\beta(g)
\end{equation}
where $\mathbf D u$ denotes the Frechet derivative for "smooth" functions $u:\G\to\R$ and
$\Q^\beta$ is the well-known Dirichlet-Ferguson process with parameter measure $\beta\cdot\leb|_{[0,1]}$.

\begin{theorem}
{\bf (i)} \
For each $k\in\N$ the Dirichlet form $(\mathcal E_k,\mathcal D_k)$ on $L^2(\R^k,\rho^\beta_k\,dx)$ is isomorphic to a restriction
$(\EE,\D_k)$  of  the Dirichlet form $(\EE,\D)$ on $L^2( L^2([0,1],\leb), \Q^\beta)$. The isomorphism is induced by the embedding $$\iota: x\mapsto \sum_{i=1}^k x_i\cdot 1_{[\frac{i-1}k,\frac i k)}$$  of $\mathbb R^k$ into $L^2([0,1],\leb)$ (and of $\Sigma_k$ into $\G$).

{\bf (ii)}\
The semigroup $\mathbb T_t^k$ associated with $(\EE,\D_k)$ is given explicitly in terms of the semigroup $T_t^k$ of the Dirichlet form $(\mathcal E_k,\mathcal D_k)$. If $g=\iota(x)$ for some $x\in\R^k$  then
$$\mathbb T_t^ku(g)=T_t^kU(x)$$
with $U:=u\circ \iota.$

{\bf (iii)}\
The strong Markov process $(g_t^k)_{t\ge0}$ on $\G$ associated with $(\EE,\D_k)$ is given by
$$g_t^k=\sum_{i=1}^k X_t^{k,i}\cdot 1_{[\frac{i-1}k,\frac i k)}$$
if $g_0=\iota(x_0)$  and if $(X_t^k)_{t\ge0}$ denotes the Markov process on $\R^k$ associated with  $(\mathcal E_k,\mathcal D_k)$
with initial condition $X_0^k=x_0$.

{\bf (iv)}\
A strong Markov process $(\mu_t^k)_{t\ge0}$ on $\Pe$ (not necessarily normal) is defined by
$$\mu_t^k(\omega)=\left(g_t^k(\omega)\right)_*\leb|_{[0,1]}=\frac1k\sum_{i=1}^k\delta_{X_t^{k,i}(\omega)}$$
that is, as the empirical distribution of the process $(X^k_t)_{t\ge0}$.
It is continuous, recurrent and reversible with invariant distribution
$\Pp_k^\beta=(\iota_\Pe)_*m_k^\beta$ obtained as push forward of the measure $m_k^\beta(dx)=\rho_k^\beta(x)dx$ under the embedding
$$\iota_\Pe: \overline\Sigma_k\to\Pe,\ x\mapsto \frac1k\sum_{i=1}^k x_i.$$
\end{theorem}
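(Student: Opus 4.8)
The plan is to identify $(\mathcal{E}_k,\mathcal{D}_k)$ with the part of $(\EE,\D)$ consisting of functions on $\G$ that depend on $g$ only through its orthogonal projection onto step functions, and then to transport semigroup and process along the isometries $\R^k\hookrightarrow\G\cong\Pe$. Beside $\iota$, let $P_k$ be the orthogonal projection in $L^2(\eI,\leb)$ onto the $k$-dimensional subspace $V_k:=\iota(\R^k)$ of functions that are constant on each $[\tfrac{i-1}k,\tfrac ik)$, and put $A_k:=\iota^{-1}\circ P_k$, so that $(A_kg)_i=k\int_{(i-1)/k}^{i/k}g(t)\,dt$ and $P_k=\iota\circ A_k$. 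One checks at once that $A_k\circ\iota=\mathrm{id}_{\R^k}$, that $A_k$ sends $\G$ into $\overline\Sigma_k$ (block-averages of an increasing function are non-decreasing) while $\iota$ sends $\overline\Sigma_k$ homeomorphically onto $\iota(\overline\Sigma_k)\subset\G$, and that $\langle\iota(x),\iota(y)\rangle_{L^2}=\tfrac1k\,x\cdot y$, hence $A_k^{*}=k\,\iota$. I would then set $\mathbb{D}_k:=\overline{\{U\circ A_k:\,U\in\C^1(\R^k)\}}^{\,\EE_1}$, the closure in $\D$ for the norm $\EE_1:=\EE+\|\cdot\|_{L^2(\Q^\beta)}^2$; this is a closed subspace of $\D$ consisting of $P_k$-invariant functions, and it is Markovian because unit contractions commute with $A_k$ up to closure, so $(\EE,\mathbb{D}_k)$ is a Dirichlet form, a restriction of $(\EE,\D)$.

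\textbf{The two identities I would need.} For $U\in\C^1(\R^k)$ the function $U\circ A_k$ depends on $g$ through the bounded linear functionals $g\mapsto(A_kg)_i$, so it is a smooth cylinder function on $\G$ and lies in $\D$ by the description of the core of $(\EE,\D)$ in \cite{renstu}; by the chain rule and $A_k^{*}=k\iota$ its Fréchet derivative is
\[
\mathbf D(U\circ A_k)(g)=k\,\iota\big(\nabla U(A_kg)\big),
\]
whence pointwise $\langle\mathbf D(U\circ A_k)(g),\mathbf D(V\circ A_k)(g)\rangle=k\,\nabla U(A_kg)\cdot\nabla V(A_kg)$ — the prefactor $k$ of $\mathcal{E}_k$ being reproduced automatically by the $\tfrac1k$-scaling of $\iota$. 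The second, decisive ingredient is the push-forward identity
\[
(A_k)_*\Q^\beta=\rho_k^\beta(x)\,dx=:m_k^\beta ,
\]
i.e.\ under the Dirichlet--Ferguson process the vector of block-averages $\big(k\int_{(i-1)/k}^{i/k}g\,dt\big)_i$ has exactly the density displayed in the Introduction.

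\textbf{The main obstacle.} Proving this push-forward identity is where the real work lies. I would derive it from the well-known structure of the Dirichlet--Ferguson process — finite-dimensional marginals over a partition of $\eI$ are Dirichlet distributed, and restrictions to disjoint subintervals are, conditionally, independent rescaled Dirichlet--Ferguson processes — together with the classical formula (of Cifarelli--Regazzini type) for the distribution of a linear, mean-type functional of a Dirichlet process. Writing the block-average $k\int_{(i-1)/k}^{i/k}g\,dt$ as a conditional mean of $\chi(g)=g_*\leb$ over its $i$-th ``quantile slab'' $[y_{i-1},y_i]$ — with $y_j=g(\tfrac jk)$ the interface levels between the $k$ blocks and $z_i$ an auxiliary variable inside the slab — turns the conditional law of each block-average into such a mean-functional density; this is precisely the inner $z_i$-integrand above, the factor $\cos(\pi z_i\beta/k)-\tfrac1\pi\sin(\pi z_i\beta/k)\log\tfrac{z_i}{1-z_i}$ accounting for the oscillatory and logarithmic terms exactly as for the one-dimensional entropic measure of \cite{renstu}. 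Integrating the $z_i$ out over the inner ranges and then $y_1,\dots,y_{k-1}$ over the simplex produces the stated formula, and the same analysis delivers the asserted continuity, positivity and upper bound for $\rho_k^\beta$. Everything else is soft Dirichlet-form bookkeeping.

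\textbf{Conclusion.} Granting the two identities, integrating the pointwise relations against $\Q^\beta$ and using the push-forward gives, for all $U,V\in\C^1(\R^k)$,
\[
\EE(U\circ A_k,V\circ A_k)=k\!\int_{\R^k}\!\nabla U\cdot\nabla V\,\rho_k^\beta\,dx=\mathcal{E}_k(U,V),
\]
together with $\|U\circ A_k\|_{L^2(\Q^\beta)}=\|U\|_{L^2(m_k^\beta)}$, so $\Phi:U\mapsto U\circ A_k$ preserves the $L^2$-norm and the Dirichlet bilinear form on the core $\C^1(\R^k)$ and hence extends to a unitary of $L^2(\R^k,m_k^\beta)$ onto $L^2(\G,\sigma(P_k),\Q^\beta)$ carrying $(\mathcal{E}_k,\mathcal{D}_k)$ onto $(\EE,\mathbb{D}_k)$; this is (i). A unitary equivalence of Dirichlet forms intertwines resolvents, hence semigroups, so $\mathbb{T}_t^k(U\circ A_k)=(T_t^kU)\circ A_k$; since any $\sigma(P_k)$-measurable $u$ equals $(u\circ\iota)\circ A_k$, evaluating at $g=\iota(x)$ and using $A_k\circ\iota=\mathrm{id}$ gives $\mathbb{T}_t^ku(g)=T_t^kU(x)$ with $U=u\circ\iota$, i.e.\ (ii) — the pointwise evaluation being legitimate because $\rho_k^\beta$ is continuous and strictly positive on $\Sigma_k$, so $T_t^k$ has a sufficiently regular kernel there. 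Finally, $\Phi$ is induced by the homeomorphism $\iota:\overline\Sigma_k\to\iota(\overline\Sigma_k)$ with $\iota_*m_k^\beta=(P_k)_*\Q^\beta$; by the quasi-homeomorphism invariance of regular Dirichlet forms and their associated processes (here $\iota$ is an honest homeomorphism) the process on $\G$ for $(\EE,\mathbb{D}_k)$ is $g_t^k=\iota(X_t^k)=\sum_{i=1}^kX_t^{k,i}\,1_{[(i-1)/k,i/k)}$, which inherits continuity, recurrence and $\iota_*m_k^\beta$-reversibility from $(X_t^k)$ — this is (iii) — and composing with the isometry $\chi:\G\to\Pe$ (carrying \eqref{Wasserstein-G} to \eqref{Wasserstein-P}) turns it into the strong Markov process $\mu_t^k=\chi(g_t^k)=(g_t^k)_*\leb=\tfrac1k\sum_{i=1}^k\delta_{X_t^{k,i}}$ on $\Pe$, continuous, recurrent and reversible with invariant law $\chi_*\iota_*m_k^\beta=(\iota_\Pe)_*m_k^\beta=\Pp_k^\beta$ since $\iota_\Pe=\chi\circ\iota$; these statements hold quasi-everywhere in the starting point, which is the meaning of the parenthetical ``not necessarily normal''. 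This proves (iv).
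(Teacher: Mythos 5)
Your proposal is correct and follows essentially the same route as the paper: restrict $(\EE,\D)$ to the closure of cylinder functions over the step-function subspace $H(k)=\iota(\R^k)$, use the chain rule together with $\langle\varphi_k^{(i)},\varphi_k^{(j)}\rangle=k\,\delta_{ij}$ (your $A_k^{*}=k\iota$) to reproduce the prefactor $k$ in $\mathcal E_k$, identify $(\hat\J_k)_*\Q^\beta=\rho_k^\beta\,dx$, and transport semigroup and process along $\iota$ and $\chi$. You also correctly locate the one genuinely hard step — the push-forward identity — and your sketch of it (conditional independence of the rescaled restrictions of the Dirichlet--Ferguson process between the quantile levels $g(j/k)$, combined with the Cifarelli--Regazzini-type density of the random mean) is precisely the paper's proof of its Theorem 4.1.
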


\begin{theorem} {\bf (i)}\
The domains $\D_{2^k}$ are increasing in $k\in\N$ with $\D=\overline{\cup_k \D^{2^k}}$.
Therefore,
$$(\EE,\D_{2^k})\to (\EE,\D)\quad\mbox{ in the sense of Mosco}$$ and, hence,
for the associated semigroups and resolvents
\begin{equation}\label{conv-t}\mathbb T^{2^k}_t\to \mathbb T_t,\quad \mathbb G^{2^k}_\alpha \to \mathbb G_\alpha\quad\mbox{strongly in $L^2(\G,\Q^\beta)$ as }k\to\infty.\end{equation}

{\bf (ii)}\
For the associated Markov processes  on $\Pe$ starting from the respective invariant distributions we obtain convergence
\begin{equation}\label{conv-mu}(\mu_t^{2^k})_{t\ge0}\to (\mu_t)_{t\ge0}\quad\mbox{as }k\to\infty\end{equation}
in distribution weakly on $\C(\R_+,\Pe)$.
\end{theorem}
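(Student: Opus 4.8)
The plan is to derive (i) from monotone (hence Mosco) convergence of the Dirichlet forms, and (ii) from convergence of all finite dimensional distributions together with tightness on path space, the tightness being the genuine difficulty.

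\emph{Part (i).} The inclusion $\D_{2^k}\subset\D_{2^{k+1}}$ comes from the refinement of dyadic partitions: under the isomorphism of Theorem~1.1 a core element of $\D_{2^k}$ reads $g\mapsto U\bigl((2^k\int_{(i-1)2^{-k}}^{i2^{-k}}g)_{i=1}^{2^k}\bigr)$ with $U\in\C^1(\R^{2^k})$, and since $2^k\mid 2^{k+1}$ this equals $(U\circ\pi)$ applied to the $2^{k+1}$ finer block averages, $\pi\colon\R^{2^{k+1}}\to\R^{2^k}$ being the linear pairwise-averaging map; composing with $\pi$ stays in the core of $\D_{2^{k+1}}$ (modulo a cut-off outside $\overline\Sigma_{2^{k+1}}$, which is irrelevant for the energy $\int_\G\|\mathbf D u\|^2\,d\Q^\beta$), and closing up in the form norm gives the inclusion -- note that only powers of $2$ nest this way. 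For $\D=\overline{\bigcup_k\D_{2^k}}$ I would use that the cylinder functions $u(g)=U(\langle g,\varphi_1\rangle,\dots,\langle g,\varphi_r\rangle)$, $U\in\C^1_b(\R^r)$, $\varphi_i\in L^2([0,1])$, form a form core of $(\EE,\D)$ (from the construction in \cite{renstu}); those with dyadic-step $\varphi_i$ already lie in $\bigcup_k\D_{2^k}$, and a general one is approximated in form norm by replacing each $\varphi_i$ by its $L^2$-projection onto generation-$k$ dyadic step functions -- both the $L^2(\Q^\beta)$- and the energy-convergence following from dominated convergence (using $\mathbf D u(g)=\sum_i\partial_iU(\langle g,\varphi_\cdot\rangle)\varphi_i$). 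Regarding $(\EE,\D_{2^k})$ as a closed form on all of $L^2(\G,\Q^\beta)$ with value $+\infty$ off $\D_{2^k}$ it is a decreasing sequence, and Mosco convergence to $(\EE,\D)$ follows -- either by the monotone convergence theorem for quadratic forms, or by checking Mosco's two conditions directly (recovery sequences from the density just proved; the lower bound from weak compactness in the Hilbert space $(\D,\EE_1)$ together with the $\EE_1$-continuity of $w\mapsto\EE(u,w)$). Mosco's theorem then yields \eqref{conv-t}.

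\emph{Part (ii), finite dimensional distributions.} Let $a_{2^k}\colon\G\to\R^{2^k}$ collect the $2^k$ dyadic block averages, so $P_{2^k}=\iota\circ a_{2^k}$ and, by Theorem~1.1, $(a_{2^k})_*\Q^\beta=m^\beta_{2^k}$; then $J_kF:=F\circ a_{2^k}$ is an isometry of $L^2(\R^{2^k},m^\beta_{2^k})$ onto $L^2(\sigma(a_{2^k}),\Q^\beta)\subset L^2(\G,\Q^\beta)$, and $\mathbb T^{2^k}_t=J_kT^{2^k}_tJ_k^{*}$ is the natural extension of the semigroup of the restricted form (vanishing on the orthogonal complement). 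Working on $\G$, the process $g^{2^k}_\cdot=\iota(X^{2^k}_\cdot)$ started from $\iota_*m^\beta_{2^k}$ is stationary and reversible, so for $f_0,\dots,f_n\in\C(\G)$ and $0=t_0\le\dots\le t_n$, $s_i:=t_i-t_{i-1}$, the multiplicativity of $J_k$ together with $J_kT^{2^k}_t=\mathbb T^{2^k}_tJ_k$ and $J_k(f\circ\iota)=f\circ P_{2^k}$ gives
\[
\mathbf E\bigl[\,\textstyle\prod_{i=0}^n f_i(g^{2^k}_{t_i})\,\bigr]=\bigl\langle f_0\circ P_{2^k},\,\mathbb T^{2^k}_{s_1}\bigl((f_1\circ P_{2^k})\cdot\mathbb T^{2^k}_{s_2}(\cdots\mathbb T^{2^k}_{s_n}(f_n\circ P_{2^k}))\bigr)\bigr\rangle_{L^2(\G,\Q^\beta)}.
\]
Since $P_{2^k}g\to g$ in $L^2([0,1])$ for every $g\in\G$, one has $f_i\circ P_{2^k}\to f_i$ in $L^2(\G,\Q^\beta)$ with $\|f_i\circ P_{2^k}\|_\infty\le\|f_i\|_\infty$; combined with the contractivity and the strong convergence $\mathbb T^{2^k}_t\to\mathbb T_t$ from (i), an inside-out induction -- at each stage apply a contraction to an $L^2$-convergent sequence and use strong semigroup convergence on its limit, then multiply by the uniformly bounded, a.e.\ convergent factor $f_i\circ P_{2^k}$ -- shows the right side converges to $\bigl\langle f_0,\mathbb T_{s_1}(f_1\mathbb T_{s_2}(\cdots))\bigr\rangle_{L^2(\G,\Q^\beta)}$, the corresponding finite dimensional distribution of the stationary Wasserstein diffusion ($g_\cdot$ under $\Q^\beta$, i.e.\ $\mu_\cdot$ under $\Pp^\beta=\chi_*\Q^\beta$). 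Thus all finite dimensional distributions converge; in particular $\Pp^\beta_{2^k}\to\Pp^\beta$ weakly.

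\emph{Part (ii), tightness.} Since $\Pe$ (equivalently $\G$ with the $L^2$-metric) is compact, tightness of $(\mu^{2^k}_\cdot)_k$ in $\C(\R_+,\Pe)$ reduces, by the standard criterion for a countable separating family of continuous coordinates, to tightness in $\C(\R_+,\R)$ of each real process $t\mapsto\langle\mu^{2^k}_t,h\rangle$ with $h(x)=x^j$, $j\ge1$ (moments separate measures on $[0,1]$). Now $\langle\mu^{2^k}_t,h\rangle=\Phi_h(X^{2^k}_t)$ with $\Phi_h(x)=2^{-k}\sum_{i=1}^{2^k}h(x_i)\in\mathcal D_{2^k}$, and the decisive point is that, although the individual coordinates $X^{2^k,i}$ carry Dirichlet energy of order $2^k$, the carr\'e du champ of $\Phi_h$ stays bounded \emph{uniformly in $k$}:
\[
\Gamma_{2^k}(\Phi_h,\Phi_h)=2^k|\nabla\Phi_h|^2=2^{-k}\sum_{i=1}^{2^k}h'(x_i)^2\le\|h'\|_\infty^2 .
\]
As $X^{2^k}$ is stationary, reversible and conservative (recurrence), the Lyons--Zheng forward--backward martingale decomposition on $[0,T]$ expresses $\Phi_h(X^{2^k}_t)-\Phi_h(X^{2^k}_s)$ as one half of a forward martingale increment minus one half of a time-reversed martingale increment, each with quadratic-variation increment $2\int_s^t\Gamma_{2^k}(\Phi_h,\Phi_h)(X^{2^k}_r)\,dr\le2\|h'\|_\infty^2(t-s)$; Burkholder--Davis--Gundy then gives
\[
\mathbf E\bigl[\,|\Phi_h(X^{2^k}_t)-\Phi_h(X^{2^k}_s)|^{4}\,\bigr]\le C\,\|h'\|_\infty^{4}\,(t-s)^{2}\qquad(0\le s\le t\le T),
\]
with $C$ independent of $k$ and $T$, while $\Phi_h(X^{2^k}_0)\in[0,1]$. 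Kolmogorov--Chentsov in its quantitative modulus-of-continuity form then makes $t\mapsto\Phi_h(X^{2^k}_t)$ tight in $\C([0,T],\R)$ uniformly in $k$, hence in $\C(\R_+,\R)$. This yields tightness of $(\mu^{2^k}_\cdot)_k$, and with the convergence of finite dimensional distributions it gives \eqref{conv-mu}.

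The hard part is precisely this uniform tightness. The components $X^{2^k,i}$ are driven by the drift $2^k\,\partial_i\log\rho^\beta_{2^k}$, which blows up along the faces of $\Sigma_{2^k}$, so no estimate phrased through the components can survive $k\to\infty$; the two ideas that rescue the argument are to test only against \emph{fixed} observables $\langle\cdot,h\rangle$, whose carr\'e du champ stays bounded, and to invoke Lyons--Zheng, which re-expresses the increments purely through martingales and thereby removes the singular drift altogether. A lesser but non-trivial point, in the finite dimensional distribution step, is the simultaneous bookkeeping of the two reference measures $m^\beta_{2^k}$ and $\Q^\beta$ and of the extension of $\mathbb T^{2^k}_t$ off $\D_{2^k}$; the isometry $J_k$ keeps this under control.
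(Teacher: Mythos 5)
Your proposal is correct and follows essentially the same route as the paper: dyadic nesting of the domains plus density of dyadic cylinder functions gives monotone (hence Mosco) convergence of the forms and strong convergence of the semigroups; the finite dimensional distributions converge by iterating this through the Markov property and stationarity; and tightness comes from the Lyons--Zheng decomposition applied to observables whose carr\'e du champ is bounded uniformly in $k$. The only (harmless) variation is in the tightness step, where you test against the moment functionals $\mu\mapsto\int x^j\,d\mu$ on $\Pe$ and close with BDG and Kolmogorov--Chentsov, whereas the paper tests against the linear functionals $g\mapsto\langle\psi,g\rangle$ on $\G$, whose square field operator is exactly constant, so that the martingales are time-changed Brownian motions and Gaussian hitting-probability bounds apply directly.
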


A closely related approximation result has been presented by Sebastian Andres and Max-K. von Renesse \cite{AndRes}. Their finite dimensional objects are more explicit; the convergence issues in their approximation, however, are quite delicate.

\end{section}
\begin{section}{Dirichlet-Ferguson Process, Entropic Measure and Wasserstein Diffusion}
\begin{subsection}{The Dirichlet-Ferguson Process}

Let $\G$ denote the space of all right continuous nondecreasing
maps $g: [0,1]\,\to\eI$ with $g(1)=1$. We will regard $\G$ as a convex subset of the Hilbert space $L^2([0,1],\leb)$.
The scalar product in $L^2([0,1],\leb)$ will always be denoted by $\langle.,.\rangle$.

\begin{proposition}
For each real number $\beta>0$ there exists a unique probability
measure $\Q^\beta$ on $\G$, called {\em Dirichlet-Ferguson process}, with
the property that for each $k\in\N$ and each  family
$0=t_0<t_1<t_2<\ldots<t_{k-1}<t_{k}=1$
\begin{equation}\label{Dir-I}
\Q^\beta\left(g_{t_1}\in dx_1,\ldots,g_{t_{k-1}}\in dx_{k-1}\right)=
\frac{\Gamma(\beta)}{\prod_{i=1}^k\Gamma(\beta\cdot (t_{i}-t_{i-1}))}
\prod_{i=1}^k (x_{i}-x_{i-1})^{\beta\cdot (t_{i}-t_{i-1})-1} dx_1\ldots dx_{k-1}.
\end{equation}
\end{proposition}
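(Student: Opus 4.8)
This is the classical existence and uniqueness of the Dirichlet--Ferguson process, and the plan is to build $\Q^\beta$ via Kolmogorov's extension theorem and then pass to a version with paths in $\G$. The first step is to identify the right-hand side of (\ref{Dir-I}) as a Dirichlet law: under the substitution $p_i := x_i - x_{i-1}$ (with $x_0=0$, $x_k=1$; the Jacobian is $1$) it becomes the density $\frac{\Gamma(\beta)}{\prod_{i}\Gamma(\beta(t_i-t_{i-1}))}\prod_{i=1}^k p_i^{\,\beta(t_i-t_{i-1})-1}$ on the simplex $\{p_i\ge 0,\ \sum_i p_i = 1\}$, i.e. the increment vector of $g$ is distributed $\mathrm{Dir}\bigl(\beta(t_1-t_0),\dots,\beta(t_k-t_{k-1})\bigr)$. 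Indexing the prescribed marginals by finite subsets $S=\{t_1<\dots<t_{k-1}\}\subset(0,1)$, the second step is to verify Kolmogorov consistency: removing a point $t_j$ from $S$ amounts, at the level of the increment vector, to merging the two consecutive parameters $\beta(t_j-t_{j-1})$ and $\beta(t_{j+1}-t_j)$ into $\beta(t_{j+1}-t_{j-1})$, and the aggregation property of the Dirichlet distribution shows that the resulting marginal of the law attached to $S$ is precisely the law attached to $S\setminus\{t_j\}$ (alternatively one integrates out one variable in (\ref{Dir-I}) directly, using the Beta integral).

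Kolmogorov's theorem then produces a probability measure $\bar\Q^\beta$ on $[0,1]^{(0,1)}$ under which the coordinate process $(g_t)_{0<t<1}$ has the finite-dimensional distributions (\ref{Dir-I}). From the two-dimensional marginals, $\bar\Q^\beta(g_s\le g_t)=1$ for all $s<t$, so on the full-measure event $A := \{\,g|_D\ \text{nondecreasing}\,\}$, with $D := \Q\cap(0,1)$, the formula $\tilde g_t := \inf\{\,g_s : s\in D,\ s>t\,\}$ for $t\in[0,1)$ together with $\tilde g_1 := 1$ defines an element $\tilde g\in\G$. Since each marginal $g_t$ is $\mathrm{Beta}(\beta t,\beta(1-t))$-distributed, these laws converge weakly to $\delta_0$ as $t\downarrow 0$ and to $\delta_1$ as $t\uparrow 1$, and more generally the process is stochastically continuous (read off from (\ref{Dir-I}) and the elementary moment formulas of the Beta and Dirichlet densities); combined with monotonicity this yields $\tilde g_t = g_t$ $\bar\Q^\beta$-a.s. for every fixed $t\in(0,1)$, as well as $\tilde g_{0+}=0$ and $\tilde g_{1-}=1$ a.s. Checking in addition that $\omega\mapsto\tilde g$ is a measurable map into $\G$ (equipped with the Borel $\sigma$-algebra inherited from $L^2([0,1],\leb)$) -- which follows because each $\tilde g_t$ is a countable infimum of coordinates and $\tilde g$ is the $L^2$-limit of the step functions built from $\tilde g_{j/n}$, $j=1,\dots,n$ -- one defines $\Q^\beta := (\tilde g)_*\bar\Q^\beta$ and obtains a probability measure on $\G$ satisfying (\ref{Dir-I}).

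For uniqueness, any probability measure on $\G$ obeying (\ref{Dir-I}) is already prescribed on the $\pi$-system of cylinders $\{\,g : g_{t_1}\in B_1,\dots,g_{t_{k-1}}\in B_{k-1}\,\}$ with $t_i\in D$ and $B_i$ Borel; since right-continuity together with $g(1)=1$ makes the evaluations $\{g\mapsto g_t : t\in D\}$ a point-separating countable family of Borel maps on the separable metric space $\G$, this $\pi$-system generates $\mathcal{B}(\G)$, and the $\pi$--$\lambda$ theorem gives equality. The step I expect to be the genuine obstacle is the path regularization of the second paragraph: one must ensure that the monotone right-continuous modification $\tilde g$ exists, is a bona fide measurable $\G$-valued random variable (recall that on $\G\subset L^2$ the evaluation maps are Borel but not continuous), and still realizes the prescribed finite-dimensional distributions, i.e. effectively that the coordinate process is stochastically continuous. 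A clean way to avoid this is to construct $\Q^\beta$ explicitly from a gamma random measure $\eta$ on $[0,1]$ with intensity $\beta\cdot\leb$: the distribution function $t\mapsto\eta([0,t])$ is automatically right-continuous, nondecreasing and pure-jump, so $g_t := \eta([0,t])/\eta([0,1])$ has paths in $\G$ by construction, and (\ref{Dir-I}) follows from the gamma--Dirichlet relation together with the independence of the total mass $\eta([0,1])$ from the normalized measure $\eta/\eta([0,1])$.
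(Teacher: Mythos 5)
Your argument is correct, and both of its components check out: the identification of (\ref{Dir-I}) with the $\mathrm{Dir}\bigl(\beta(t_1-t_0),\dots,\beta(t_k-t_{k-1})\bigr)$ law of the increments, the consistency via the aggregation property (equivalently the Beta integral $\int_{x_{j-1}}^{x_{j+1}}(x_j-x_{j-1})^{a-1}(x_{j+1}-x_j)^{b-1}\,dx_j=B(a,b)(x_{j+1}-x_{j-1})^{a+b-1}$), the passage from the Kolmogorov extension to a monotone right-continuous modification via stochastic continuity (which follows from $g_t-g_s\sim\mathrm{Beta}(\beta(t-s),\beta(1-t+s))$ and the first two moments), and the $\pi$--$\lambda$ uniqueness argument using the countable point-separating family of evaluations at rational times. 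The paper itself offers no proof of the proposition; it simply records, immediately afterwards, that $\Q^\beta$ is the law of the normalized Gamma subordinator $\bigl(\gamma_{t\beta}/\gamma_\beta\bigr)_{t\in[0,1]}$ -- which is exactly the ``clean way'' you sketch in your final sentences, where existence of $\G$-valued paths is automatic and (\ref{Dir-I}) drops out of the gamma--Dirichlet relation and the independence of $\gamma_\beta$ from the normalized process. So your main line of attack is genuinely different from (and more self-contained than) the paper's: the Kolmogorov-plus-regularization route is elementary and makes explicit exactly which properties of the finite-dimensional laws are used (consistency, monotonicity of two-dimensional marginals, stochastic continuity), at the cost of the path-regularization and measurability work you rightly flag as the delicate step; the subordinator construction buys path regularity for free and is the one the paper relies on, but it presupposes the Gamma process and the independence lemma. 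Either route is a complete proof; you have in effect given both.
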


The Dirichlet-Ferguson process can be identified with the normalized distribution of the standard Gamma process $(\gamma_t)_{t\geq 0}$:
{\it For each $\beta >0$, the law of the process   $(\frac{\gamma_{t\cdot
\beta}}{\gamma_\beta})_{t \in \eI}$  is the Dirichlet-Ferguson process
$\Q^\beta$.}

Recall that a right continuous, real valued Markov process
$(\gamma_t)_{t\geq 0}$ starting in zero is called standard Gamma
process  if its increments $\gamma_t - \gamma_s$  are independent and
distributed for $0\leq s< t$ according to    $G_{t-s}(dx)
=\frac 1
{\Gamma(t-s)} 1_{[0,\infty)}(x)x^{t-s -1} e^ {-x}dx$.

\medskip

In \cite{renstu} as well as in \cite{RYZ} a change of variable formula (under composition) has been derived for the Dirichlet-Ferguson process.

\end{subsection}

\begin{subsection}{The Dirichlet Form on $\G$}

Let $\Cyl^1(\G)$
denote the set of all ('cylinder') functions $u: \G\to\R$ which can be written as
 $u(g)=U\left(\langle g,\psi_1\rangle,\ldots,\langle g,\psi_n\rangle\right)$ with $n\in\mathbb{N}$, $U\in \C^1(\R^n,\R)$ and
$\psi_1,\ldots,\psi_n\in L^2([0,1],\leb)$. For $u$ of this form
the gradient
$${\mathbf D} u(g)=\sum_{i=1}^n \partial_i U\left(\langle g,\psi_1\rangle,\ldots,\langle g,\psi_n\rangle\right)\cdot \psi_i(.)$$
exists in $L^2(\eI,\leb)$
and
$$\|{\mathbf D} u(g)\|^2=
\int_0^1\left|\sum_{i=1}^n \partial_i U\left(\langle g,\psi_1\rangle,\ldots,\langle g,\psi_n\rangle\right)\cdot \psi_i(s)\right|^2ds.$$

For $u,v\in  \Cyl^1(\G)$ we define the  Dirichlet integral
\begin{equation}\label{Wasserstein-dir-int}
\EE(u,v)=\int_{\G} \langle{\mathbf D} u(g),  {\mathbf D} v(g)\rangle \, d\Q^\beta(g).
\end{equation}

\begin{theorem}[\cite{renstu} Thm. 7.5, 7.8, \cite{DoeSta}]

{\bf (i)} \ $(\EE,\Cyl^1(\G))$ is closable. Its closure $(\EE,
\D)$ is a regular, strongly local, recurrent Dirichlet form on $L^2(\G,\Q^\beta)$.

{\bf(ii)} \ The associated Markov process $(g_t)_{t\ge0}$ on $\G$ is continuous, reversible and recurrent.

{\bf(iii)} \
The Dirichlet form $(\EE, \D)$ satisfies a logarithmic Sobolev inequality with constant $\frac1\beta$.
\end{theorem}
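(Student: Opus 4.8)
The backbone of all three assertions is the quasi-invariance of the Dirichlet--Ferguson process $\Q^\beta$ under the action of the diffeomorphisms of $[0,1]$ by composition, together with the integration-by-parts (IBP) formula it produces. For $b\in C^2([0,1])$ with $b(0)=b(1)=0$ let $(\tau^b_s)_s$ be the flow it generates on $[0,1]$; then $g\mapsto\tau^b_s\circ g$ maps $\G$ into itself, and the change-of-variable formula for $\Q^\beta$ recorded in \cite{renstu} and \cite{RYZ} provides the explicit Radon--Nikodym density $X^\beta_s$ of $(\tau^b_s\circ\,\cdot\,)_*\Q^\beta$ with respect to $\Q^\beta$. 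Differentiating the identity
\begin{equation*}
\int_\G u(\tau^b_s\circ g)\,d\Q^\beta(g)=\int_\G u(g)\,X^\beta_s(g)\,d\Q^\beta(g)
\end{equation*}
at $s=0$ yields, for $u\in\Cyl^1(\G)$,
\begin{equation*}
\int_\G\langle{\mathbf D}u(g),\,b\circ g\rangle\,d\Q^\beta(g)=-\int_\G u(g)\,V_b(g)\,d\Q^\beta(g),
\end{equation*}
where the logarithmic-derivative field $V_b=\tfrac{d}{ds}\big|_{0}X^\beta_s$ is explicit in terms of $b$ and of the jump data of $g$ and, after the boundary estimates on the densities involved, lies in $\bigcup_{p>1}L^p(\G,\Q^\beta)$. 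Making this formula rigorous and controlling the integrability of $V_b$ near the boundary is where the bulk of the work sits.

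Granted the IBP formula, part (i) proceeds as follows. \emph{Closability}: one applies the standard criterion for gradient-type forms (Albeverio--R\"ockner, Ma--R\"ockner) -- if $u_n\in\Cyl^1(\G)$, $u_n\to0$ in $L^2(\Q^\beta)$ and ${\mathbf D}u_n$ is $L^2$-Cauchy, then, after truncating to reduce to uniformly bounded $u_n$, the IBP identity forces the $L^2$-limit of ${\mathbf D}u_n$ to be orthogonal to every field $\phi\cdot(b\circ g)$ with $\phi\in\Cyl^1(\G)$ -- and then one checks totality of these fields $\Q^\beta$-almost surely. This totality step is the first real obstacle: $\Q^\beta$ is carried by the thin set of monotone functions and is in fact concentrated on the purely atomic ones, so the directions along which $\G$ can be perturbed (and hence along which a quasi-invariance is available) are only the composition directions $b\circ g$, spanning the $\sigma(g)$-measurable vector fields; to reach the full Fr\'echet gradient entering $\EE$ one needs either the identification of the $\G$-picture with its counterpart on $\Pe$ on which the form is built, or additional integration-by-parts directions supplied by the Malliavin calculus of the underlying Gamma subordinator. \emph{Regularity}: $\G$ is compact in $L^2([0,1],\leb)$ by Helly's selection theorem, and $\Cyl^1(\G)$ is a point-separating subalgebra of $C(\G)$ containing the constants, so by Stone--Weierstrass it is uniformly dense in $C(\G)$; together with its $L^2$-density this gives a regular Dirichlet form. \emph{Strong locality}: immediate, since ${\mathbf D}u(g)=\sum_i\partial_iU(\langle g,\psi_1\rangle,\dots,\langle g,\psi_n\rangle)\psi_i$ depends only on the first-order jet of $U$, so $\EE(u,v)=0$ whenever $u$ is $\Q^\beta$-a.e.\ constant on a neighbourhood of the support of $v$. \emph{Recurrence}: $\mathbf1\in\Cyl^1(\G)$ with ${\mathbf D}\mathbf1=0$, and since $\Q^\beta$ is a probability measure $\mathbf1$ belongs to the extended Dirichlet space with $\EE(\mathbf1,\mathbf1)=0$, whence recurrence and conservativeness by the Fukushima--Oshima--Takeda criterion.

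Part (ii) is essentially a translation of part (i) through the Fukushima--Oshima--Takeda correspondence: a regular Dirichlet form possesses a properly associated $\Q^\beta$-symmetric Hunt process, the symmetry giving reversibility; since the form has no killing part (being purely of carr\'e-du-champ type) and is strongly local, this process is a diffusion, i.e.\ has $\Q^\beta$-a.s.\ continuous sample paths for quasi-every starting point; and recurrence of the form transfers to recurrence of the process. The points needing care are the usual exceptional-set caveat and the verification that there is no killing.

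Part (iii), the logarithmic Sobolev inequality with constant $1/\beta$, is the substantial part; it is due to \cite{DoeSta} and I would approach it through the finite-dimensional approximation. By the two approximation theorems of the Introduction (with part (i) of the second) the domains $\D_{2^k}$ exhaust $\D$, and a logarithmic Sobolev inequality is preserved when passing to the closure of an increasing union of form domains, so it suffices to prove the inequality with the same constant $1/\beta$, uniformly in $k$, for the finite-dimensional forms $(\mathcal E_k,\mathcal D_k)=k\int_{\R^k}\nabla U\cdot\nabla V\,\rho^\beta_k\,dx$ on $L^2(\R^k,\rho^\beta_k\,dx)$. Bakry--\'Emery is not directly available, since $\rho^\beta_k$ is not log-concave and degenerates on $\partial\Sigma_k$; instead one compares $\rho^\beta_k$ with the Dirichlet-type reference density it is bounded above by -- whose spacing variables are $\mathrm{Dir}(\beta/k,\dots,\beta/k)$-distributed, the factor $k$ in $\mathcal E_k$ compensating the shrinking parameters so that the constant remains dimension-free -- reduces via the explicit one-dimensional structure of those reference laws to one-dimensional logarithmic Sobolev estimates, and absorbs the ratio $\rho^\beta_k/(\text{reference density})$ together with the inner $z$-integrals by a perturbation/Rothaus-type tightening. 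The recurring obstacle throughout -- in the integrability of $V_b$, in the totality step of part (i), and above all in the uniform-in-$k$ inequality of part (iii) -- is the degeneration of all of these densities near the boundary of the simplices, i.e.\ near measures assigning little mass to some subinterval of $[0,1]$; essentially every hard estimate localizes there.
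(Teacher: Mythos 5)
This theorem carries the citation bracket ``[\cite{renstu} Thm.\ 7.5, 7.8, \cite{DoeSta}]'' and is \emph{not proved in this paper}: parts (i)--(ii) are imported from von Renesse--Sturm and part (iii) from Doering--Stannat, so there is no internal proof to compare against. Measured against those sources, your reconstruction of (i)--(ii) follows the same route: quasi-invariance of $\Q^\beta$ under $g\mapsto h\circ g$, the resulting integration-by-parts formula with logarithmic derivative $V_b$, closability via the standard criterion for gradient forms, regularity from compactness of $\G$ in $L^2([0,1],\leb)$ plus Stone--Weierstrass, strong locality from the form of $\mathbf D$, recurrence from $\mathbf 1\in\Cyl^1(\G)$ with $\EE(\mathbf 1,\mathbf 1)=0$, and the Fukushima--Oshima--Takeda correspondence for (ii). The totality obstacle you flag is real but resolvable without importing extra Malliavin directions: $\Q^\beta$-a.s.\ $g$ is strictly increasing (the normalized Gamma subordinator has dense jump times), hence injective, so by Lusin--Souslin $\sigma(g)$ is the full Borel algebra on $[0,1]$ and the closed span of $\{b\circ g:\ b\in C^2,\ b(0)=b(1)=0\}$ is all of $L^2([0,1],\leb)$; this is exactly what is needed to force the limiting gradient to vanish.

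The genuine gap is in your part (iii). You propose to obtain the LSI with constant $1/\beta$, uniformly in $k$, for $(\mathcal E_k,\mathcal D_k)$ by comparing $\rho^\beta_k$ with the Dirichlet-type reference density $\tilde\rho_k$ and ``absorbing the ratio by a perturbation/Rothaus-type tightening.'' A Holley--Stroock/Rothaus perturbation requires the oscillation of $\log(\rho^\beta_k/\tilde\rho_k)$ to be finite and, for your scheme, bounded independently of $k$; neither holds. The paper's own two-sided bounds give an upper bound for $\rho^\beta_k$ with exponents $\beta/k-1$ but only a lower bound with the strictly worse exponent $2\beta/k-1$, and only on a restricted region near a single contact $x_{l+1}\approx x_l$ (the author explicitly does not know whether $2\beta/k-1$ can be improved to $\beta/k-1$), so the log-ratio is not even bounded on $\Sigma_k$. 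Moreover, any multiplicative perturbation factor $e^{\mathrm{osc}}>1$ would destroy the sharp constant $1/\beta$ asserted in the statement. The proof in \cite{DoeSta} does not pass through the densities $\rho^\beta_k$ of the averaged functionals $\hat\J_k(g)$ at all; the uniform finite-dimensional inequality is obtained for the Dirichlet-distributed marginals $(g(t_1),\dots,g(t_{k-1}))$ via their Gamma/Beta product structure. As written, this step of your argument does not go through.
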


\end{subsection}

\begin{subsection}{The Dirichlet Form on the Wasserstein Space}
Let
$\Pe=\Pe([0,1])$  denote the space of
probability measures on
 the unit interval $[0,1]$.
The map
$\chi: \ \G\to\Pe, \ g\mapsto
g_*\leb|_{[0,1]}$  establishes a bijection between $\G$ and
$\Pe$. The inverse map $\chi^{-1}: \ \Pe\to\G, \
\mu\mapsto g_\mu$ assigns to each probability measure  $\mu\in\Pe$
its  inverse distribution function defined by
$g_\mu(t):= \inf \{ s\in\eI: \ \mu[0,s]  > t\}$
with $\inf\emptyset:=1$. The $L^2$-Wasserstein distance on $\Pe$ is characterized by
$d_W(\mu,\nu)=\|g_\mu-g_\nu\|_{L^2}$ for all $\mu,\nu\in\Pe$.

\smallskip

The {\em entropic measure} $\mathbb{P}^\beta$ on
$\Pe=\Pe(\eI)$ is defined as the push forward of the Dirichlet
process $\Q^\beta$ on $\G$ under the map $\chi$.

\begin{corollary}[\cite{renstu} Thm. 7.17]\label{wasserstein-dir-form}
The image of the
Dirichlet form defined above under the map $\chi$ is the regular, strongly
local, strongly local, recurrent  Dirichlet form $\EE_\Pe$ on $L^2(\Pe,\Pp^\beta)$, defined on its core $\Zyl^1(\Pe)$ by
\begin{equation}\label{p-wasserstein-form}
\EE_\Pe(u,v)=\int_{\Pe} \langle D u(\mu), D v(\mu)\rangle_{L^2(\mu)}^2 d\Pp^\beta(\mu).
\end{equation}
The associated Markov process $(\mu_t)_{t\ge0}$ on $\Pe$, called Wasserstein diffusion, is given by
$$\mu^{(\omega)}_t=(g_t^{(\omega)})_*\leb|_{[0,1]}.$$
\end{corollary}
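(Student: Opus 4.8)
The engine is the transport of the form $(\EE,\D)$ on $\G$ through the bijection $\chi$, combined with the general fact that the image of a regular Dirichlet form under a homeomorphism of the state spaces that pushes forward the reference measure is again a regular Dirichlet form. By the identity $d_W(\mu,\nu)=\|g_\mu-g_\nu\|_{L^2}$ recalled above, $\chi$ is an isometry, hence a homeomorphism, between $\G\subset L^2(\eI,\leb)$ and $(\Pe,d_W)$, the latter carrying the compact metrizable weak topology on $\Pe(\eI)$; moreover $\chi^{-1}$ (the quantile map) is Borel and $\Pp^\beta=\chi_*\Q^\beta$ by the very definition of the entropic measure. Thus $\Phi u:=u\circ\chi$ is a unitary isomorphism $L^2(\Pe,\Pp^\beta)\to L^2(\G,\Q^\beta)$, and $(\EE_\Pe,\D_\Pe):=\big(\EE(\Phi\,\cdot\,,\Phi\,\cdot\,),\,\Phi^{-1}\D\big)$ is a closed symmetric form on $L^2(\Pe,\Pp^\beta)$. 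Since $\Phi$ is induced by a pointwise bijection it intertwines every normal contraction, so $\EE_\Pe$ is Markovian; since $\chi$ carries topological supports to topological supports, strong locality passes over; since $1\in\D_\Pe$ and a sequence $u_n\to 1$ in $L^2(\G,\Q^\beta)$ with $\EE(u_n,u_n)\to 0$ transports to such a sequence on $\Pe$, recurrence passes over; and since $\Cyl^1(\G)$ is a core consisting of functions continuous on the compact space $\G$, its $\Phi$-preimage is a core of continuous functions on $\Pe$, so $\EE_\Pe$ is regular. Finally, being the image under the homeomorphism $\chi$ of the Hunt process $(g_t)$ associated with $(\EE,\D)$, the process $\mu_t:=\chi(g_t)=(g_t)_*\leb|_{[0,1]}$ is a continuous, $\Pp^\beta$-reversible, recurrent Hunt process on $\Pe$, properly associated with $\EE_\Pe$.

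It remains to recognise $\EE_\Pe$ on the cylinder class $\Zyl^1(\Pe)$ as (\ref{p-wasserstein-form}). For $u(\mu)=U(\int\phi_1\,d\mu,\dots,\int\phi_n\,d\mu)$ with $U\in\C^1(\R^n)$ and $\phi_1,\dots,\phi_n\in\C^1(\eI)$, one has $\Phi u(g)=U\big(\int_0^1\phi_1(g(s))\,ds,\dots,\int_0^1\phi_n(g(s))\,ds\big)$ — note $\int_0^1\phi_i(g(s))\,ds=\int\phi_i\,d\mu$ when $\mu=\chi(g)$ — which is a Fr\'echet-differentiable function on $\G$ with $\mathbf D(\Phi u)(g)=\sum_i\partial_iU(\cdots)\,(\phi_i'\circ g)\in L^2(\eI,\leb)$, where $(\cdots)$ abbreviates $\big(\int\phi_1\,d\mu,\dots,\int\phi_n\,d\mu\big)$ with $\mu=\chi(g)$; by the representation (\ref{Wasserstein-G}) of $(\EE,\D)$ on smooth functions (Thm.~7.5, 7.8 of \cite{renstu}), $\Phi u\in\D$. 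The key is the pointwise identity
\[
\|\mathbf D(\Phi u)(g)\|_{L^2(\eI,\leb)}^2
=\int_0^1\Big|\sum_i\partial_iU(\cdots)\,\phi_i'(g(s))\Big|^2 ds
=\int_{[0,1]}\Big|\sum_i\partial_iU(\cdots)\,\phi_i'(x)\Big|^2\mu(dx)
=\|Du(\mu)\|_{L^2(\mu)}^2 ,
\]
where $Du(\mu)=\sum_i\partial_iU(\cdots)\,\phi_i'$ is the Wasserstein gradient of $u$ (that of a linear functional $\mu\mapsto\int\phi\,d\mu$ being $\phi'$, whence the general case by the chain rule). Polarising, integrating against $\Q^\beta$, and changing variables $\mu=\chi(g)$ turns $\EE(\Phi u,\Phi v)$ into $\int_\Pe\langle Du(\mu),Dv(\mu)\rangle_{L^2(\mu)}\,d\Pp^\beta(\mu)$, i.e. (\ref{p-wasserstein-form}).

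The point I expect to require real work is showing that $\Zyl^1(\Pe)$ is itself a core, i.e. that $\Phi(\Zyl^1(\Pe))$ is dense in $\D$ for the $\EE_1$-norm — uniform density of $\Zyl^1(\Pe)$ in $C(\Pe)$ being the easy direction, by Stone--Weierstrass ($\Zyl^1(\Pe)$ contains the constants, separates probability measures via $\{\int\phi\,d\mu:\phi\in\C^1\}$, and up to uniform approximation is an algebra). Since $\Cyl^1(\G)$ is $\EE_1$-dense in $\D$ by construction, it suffices to approximate each linear generator $g\mapsto\langle g,\psi\rangle$ in $\EE_1$ by elements of $\Phi(\Zyl^1(\Pe))$; for $\psi\equiv 1$ this is exact, $\langle g,1\rangle=\int_0^1 g(s)\,ds=\int_{[0,1]}x\,\mu(dx)\in\Zyl^1(\Pe)$, and for general $\psi$ one would express $\langle g,\psi\rangle$ through the quantile function and approximate it through smooth cylinder functions of $\mu$ — but the delicate step is to keep the Dirichlet energies of the approximants bounded and convergent, so that the approximation survives in the form norm and not merely pointwise; this is the technical heart of the statement. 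A secondary technical point, needed for (\ref{p-wasserstein-form}) to make literal sense, is to check that $\mu\mapsto Du(\mu)\in L^2(\mu)$ is suitably measurable and that the change of variables $\mu=\chi(g)$ under $\Q^\beta$ is legitimate, so that the integral there is well defined and indeed coincides with (\ref{Wasserstein-dir-int}).
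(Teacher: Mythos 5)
The paper gives no proof of this Corollary --- it is quoted verbatim from \cite{renstu}, Thm.~7.17 --- so your proposal can only be measured against that reference. Your transport-of-structure framework (the unitary $\Phi u=u\circ\chi$ induced by the isometry $\chi$, with $\Pp^\beta=\chi_*\Q^\beta$, carrying closedness, Markovianity, strong locality, recurrence and the associated process across) is exactly the mechanism used there, and your pointwise identity $\|\mathbf D(\Phi u)(g)\|^2_{L^2(\eI,\leb)}=\|Du(\mu)\|^2_{L^2(\mu)}$ via the change of variables $\mu=g_*\leb|_{[0,1]}$ is the correct and essential computation.

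The genuine gap is the one you flag yourself but do not close, and it is larger than you indicate: it affects not only the core property but already the assertion $\Phi u\in\D$ for $u\in\Zyl^1(\Pe)$. With the definitions of the present paper, $\Cyl^1(\G)$ consists of $\C^1$ functions of finitely many \emph{linear} functionals $g\mapsto\langle g,\psi\rangle$, whereas $\Phi u(g)=U\bigl(\int_0^1\Psi_1(g(s))\,ds,\dots\bigr)$ is a function of \emph{nonlinear} functionals of $g$. The formal Fr\'echet-derivative computation does not by itself show that $\Phi u$ lies in the closure $\D$ of $(\EE,\Cyl^1(\G))$, nor that its energy is computed by that derivative; one must approximate $g\mapsto\int_0^1\Psi(g(s))\,ds$ by elements of $\Cyl^1(\G)$ with convergence of the Dirichlet energies (e.g.\ via $\frac1n\sum_i\Psi\bigl(\langle g,n\,1_{((i-1)/n,i/n]}\rangle\bigr)$, whose gradients converge to $\Psi'\circ g$ in $L^2$ for $\Q^\beta$-a.e.\ $g$), and conversely approximate the generators $\langle g,\psi\rangle$ of $\Cyl^1(\G)$ by elements of $\Phi(\Zyl^1(\Pe))$ in form norm to obtain that $\Zyl^1(\Pe)$ is a core. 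These two density statements are the entire nontrivial content of \cite{renstu} Thm.~7.17 and its preparatory lemmas; as written, your argument identifies the quadratic form on $\Zyl^1(\Pe)$ only modulo them. Everything else --- regularity via Stone--Weierstrass on the compact space $\Pe$, the transfer of the process to $\mu_t=(g_t)_*\leb|_{[0,1]}$ --- is fine. (Incidentally, the exponent $2$ on $\langle Du,Dv\rangle_{L^2(\mu)}$ in (\ref{p-wasserstein-form}) is a typo in the statement; your computation correctly produces the unsquared inner product.)
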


Here $\Zyl^1(\Pe)$ denotes the set of all  functions $u: \Pe\to\R$ which can be written as
$u(\mu)=U\left(\int_0^1 \Psi_1d\mu,\ldots,\int_0^1\Psi_nd\mu\right)$
with some $n\in\N$, some $U\in\C^1(\R^n)$ and some $\Psi_1,\ldots,\Psi_n\in\C^1(\eI)$.
For $u$  as above we define its 'Wasserstein gradient' $D u(\mu)\in L^2(\eI,\mu)$ by
$$D u (\mu)=\sum_{i=1}^n \partial_i U(\smint \Psi_1 d\mu,\ldots,\smint \Psi_nd\mu)\cdot \Psi_i'(.)$$
with norm
$$\|D u(\mu)\|_{L^2(\mu)}=\left[
\int_0^1\left|\sum_{i=1}^n \partial_i U(\smint \Psi_1 d\mu,\ldots,\smint \Psi_nd\mu)\cdot \Psi_i'\right|^2d\mu\right]^{1/2}.$$
Recall that the tangent space at a given point $\mu\in\Pe$ can be identified with $L^2(\eI,\mu)$.

\smallskip

The analogue to (\ref{p-wasserstein-form}) on multidimensional spaces has been constructed in \cite{Sturm}.
\end{subsection}
\end{section}

\begin{section}{The Distribution of Random Means}

Let $m_1^\beta=\zeta_*\Pp^\beta$ denote the distribution of the random variable $\zeta: \mu\mapsto\int_0^1x\,d\mu(x)$ which assigns to each probability measure $\mu\in\Pe$ its mean value ({\em random means} of the {\em random probability measure} $\Pp^\beta$). Actually, $m_1^\beta$ coincides with the distribution of the random means of the random probability measure $\Q^\beta$, that is, $m_1^\beta=\tilde\zeta_*\Q^\beta$ where
$\tilde\zeta: g\mapsto \int_0^1t\,dg(t)$ assigns to each function $g\in\G$ the mean value of the probability measure $dg$.

Indeed, integration by parts yields
$\int_0^1t\,dg(t)=\int_0^1(1-g(t))\,dt=\int_0^1(1-x)d\mu(x)$ for $\mu=g_*\leb$.
Due to the symmetry of the entropic measure under the transformation $x\mapsto 1-x$ the distribution of $\int_0^1(1-x)d\mu(x)$ coincides with $m_1^\beta$.

The law of the  random means of the Dirichlet-Ferguson process is a well studied quantity.
Let $\Theta_\beta$   be the distribution function of $m_1^\beta$.   For simplicity, we will restrict ourselves in this section to the case $\beta\in(0,1)$. The following result can be found e.g. in \cite{Regazzini-Guglielmi-Nunno}, Proposition 8 and Proposition 3.

\begin{lemma}
$\Theta_\beta$ admits the following representations
\begin{equation*}\Theta_\beta(x)=\frac12+\frac1\pi\int_0^\infty\exp\left(-\frac\beta2\int_0^1\log\left[1+t^2(x-y)^2\right]dy\right)\cdot
\sin\left(\beta\int_0^1\arctan\left[t(x-y)\right]dy\right)\frac{dt}t\end{equation*}
and
\begin{equation*}\Theta_\beta(x)=\frac{e^\beta}{\pi}\int_0^x(x-y)^{\beta-1}\cdot y^{-\beta y}\cdot (1-y)^{-\beta(1-y)}\cdot \sin(\pi\beta y)\,dy.\end{equation*}
\end{lemma}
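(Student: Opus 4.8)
Both formulas are standard consequences of the Cifarelli--Regazzini theory of linear functionals of the Dirichlet process, and I would obtain them by inverting one master identity -- the Markov--Krein (generalized Stieltjes) identity -- in two different ways. \textit{Step 1: the Markov--Krein identity.} The law $m_1^\beta$ of the random mean $M=\int_0^1 t\,dg(t)$, $g\sim\Q^\beta$, is determined by
\begin{equation}\label{MK}
\int_{\R}(z-m)^{-\beta}\,m_1^\beta(dm)=\exp\Big(-\beta\int_0^1\log(z-y)\,dy\Big)=e^\beta\,z^{-\beta z}(z-1)^{\beta(z-1)},\qquad z\in\mathbb{C}\setminus[0,1],
\end{equation}
with principal branches, the last equality being the elementary primitive $\int_0^1\log(z-y)\,dy=z\log z-(z-1)\log(z-1)-1$. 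I would prove \eqref{MK} by checking it for a two--atom parameter measure $\alpha_1\delta_{a_1}+\alpha_2\delta_{a_2}$, where $M=a_2+(a_1-a_2)p$ with $p\sim\mathrm{Beta}(\alpha_1,\alpha_2)$ and it reduces to Euler's integral $\int_0^1 p^{\alpha_1-1}(1-p)^{\alpha_2-1}(1-wp)^{-(\alpha_1+\alpha_2)}dp=B(\alpha_1,\alpha_2)(1-w)^{-\alpha_1}$, extending by induction (iterating this over the $n$--point Dirichlet density) to all discrete parameter measures, and finally to $\beta\cdot\leb|_{[0,1]}$ by weak approximation, since $m\mapsto(z-m)^{-\beta}$ is bounded continuous on $[0,1]$ for $z\notin[0,1]$.

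\textit{Step 2: the first representation.} Because $\beta\in(0,1)$, a Gil--Pelaez-type inversion is available: for a probability measure $\rho$ on $\R$ with no atom at $x$,
\[
\rho\big((-\infty,x]\big)=\tfrac12+\tfrac1\pi\int_0^\infty\mathrm{Im}\Big[\int_{\R}\big(1-it(x-m)\big)^{-\beta}\rho(dm)\Big]\frac{dt}{t}.
\]
Its engine is the scalar identity $\tfrac1\pi\int_0^\infty\mathrm{Im}[(1-its)^{-\beta}]\,\tfrac{dt}{t}=\tfrac12\,\mathrm{sgn}(s)$, which I would get from $(1-its)^{-\beta}=\tfrac1{\Gamma(\beta)}\int_0^\infty u^{\beta-1}e^{-u}e^{iuts}du$, Fubini, and the Dirichlet integral $\int_0^\infty\tfrac{\sin(aut)}{t}dt=\tfrac{\pi}{2}\mathrm{sgn}(au)$, the remaining factor $\tfrac1{\Gamma(\beta)}\int_0^\infty u^{\beta-1}e^{-u}du=1$ supplying the constant. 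Applying this with $\rho=m_1^\beta$ and exchanging the $m$-- and $t$--integrals (legitimate since $|\mathrm{Im}(1-its)^{-\beta}|\le\beta(1+t^2s^2)^{-\beta/2}|\arctan(ts)|$ is $\tfrac{dt}{t}$--integrable near $0$ and, as $\beta>0$, near $\infty$, uniformly for $s\in[-1,1]$), using that $m_1^\beta$ is non--atomic, and then rewriting the bracket by \eqref{MK} with $z=x+i/t$ via $1-it(x-y)=(-it)\big((x+\tfrac it)-y\big)$, one gets $\int_0^1(1-it(x-m))^{-\beta}m_1^\beta(dm)=\exp(-\beta\int_0^1\log(1-it(x-y))\,dy)$; since $\log(1-it(x-y))=\tfrac12\log[1+t^2(x-y)^2]-i\arctan[t(x-y)]$, taking the imaginary part produces exactly the integrand of the first formula.

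\textit{Step 3: the second representation.} This comes from collapsing the $t$--integral onto the branch cut of $\Phi(z):=e^\beta z^{-\beta z}(z-1)^{\beta(z-1)}=\exp(-\beta\int_0^1\log(z-y)dy)$. After $t\mapsto1/s$ the first formula reads $\Theta_\beta(x)-\tfrac12=\tfrac1\pi\int_0^\infty s^{\beta-1}\mathrm{Im}[e^{i\pi\beta/2}\Phi(x+is)]\,ds$, and writing the imaginary part via Schwarz reflection $\overline{\Phi(z)}=\Phi(\bar z)$ this equals $\tfrac1{2\pi i}\int_{-i\infty}^{i\infty}w^{\beta-1}\Phi(x+w)\,dw$ (the branch point $w=0$ is integrable since $\beta>0$). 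Deforming the path onto $w\in[-x,0]$, equivalently $y=x+w\in[0,x]$, the two boundary values combine: $|w|^{\beta-1}$ yields $(x-y)^{\beta-1}$, the modulus $\exp(-\beta\int_0^1\log|y-r|\,dr)=e^\beta y^{-\beta y}(1-y)^{-\beta(1-y)}$ of $\Phi$ on its cut yields those factors, and the jump of the total phase evaluates to $\pm\pi\beta y$, producing $\sin(\pi\beta y)$ together with the constant $e^\beta/\pi$; collecting terms gives the second formula. A cleaner alternative avoiding branch bookkeeping: using integration by parts, the Euler--Beta evaluation $\int_y^1(z-x)^{-\beta-1}(x-y)^{\beta-1}dx=\tfrac{(1-y)^\beta(z-1)^{-\beta}}{\beta(z-y)}$, and the Sokhotski--Plemelj formula for the Cauchy transform of the jump of $\Phi$ across $(0,1)$, one verifies that the signed measure defined by the second formula has Stieltjes transform \eqref{MK}; injectivity of the generalized Stieltjes transform on finite measures supported in $[0,1]$ then identifies it with $d\Theta_\beta$.

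\textit{Main obstacle.} All the work is in the analytic bookkeeping: justifying the Gil--Pelaez inversion for Dirichlet means -- which takes this clean one--integral form precisely because $\beta<1$, guaranteeing $(1-its)^{-\beta}\to0$ -- and controlling the conditionally convergent $t$--integral near $0$ and $\infty$; and, most delicately, tracking the branches of $\log(z-y)$, of $z^{-\beta z}$, $(z-1)^{\beta(z-1)}$ and of $w^{\beta-1}$ through the contour deformation so that the jump produces the kernel with the correct sign and the constant $e^\beta/\pi$. Since both formulas are quoted verbatim from \cite{Regazzini-Guglielmi-Nunno}, in the paper one may simply cite them; the above is the route behind the reference.
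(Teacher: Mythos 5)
The paper gives no proof of this lemma at all: both formulas are quoted from \cite{Regazzini-Guglielmi-Nunno} (Propositions 8 and 3), so there is no internal argument to compare yours against. What you propose is, in substance, the standard Cifarelli--Regazzini derivation underlying that reference, and the architecture is right: the Markov--Krein identity $\int(z-m)^{-\beta}m_1^\beta(dm)=e^\beta z^{-\beta z}(z-1)^{\beta(z-1)}$ (correctly reduced to Euler's Beta integral on two-atom parameter measures and extended by approximation), the Gil--Pel\'aez-type inversion built on the scalar identity $\frac1\pi\int_0^\infty\mathrm{Im}[(1-its)^{-\beta}]\frac{dt}{t}=\frac12\mathrm{sgn}(s)$, and the collapse onto the branch cut of $\Phi(z)=\exp(-\beta\int_0^1\log(z-y)\,dy)$; your auxiliary Beta evaluation in the ``cleaner alternative'' also checks out. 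Two cautions. First, the decay $(1-its)^{-\beta}\to0$ holds for every $\beta>0$, so it is not what makes $\beta<1$ special; the first representation is valid for all $\beta>0$, and the restriction $\beta\in(0,1)$ in the paper is there for the subsequent differentiation of $\Theta_\beta$ in Proposition \ref{mu_1}, where $(x-y)^{\beta-1}$ is non-integrable in a naive sense. Second, Step 3 as written is too quick: since $\Phi(z)\sim z^{-\beta}$ at infinity, the integrand $w^{\beta-1}\Phi(x+w)$ behaves like $w^{-1}$, so the large arcs in the deformation contribute a nonvanishing limit --- this is precisely where the additive $\frac12$ is absorbed --- and besides the cut on $[-x,0]$ (whose combined phase jump is $-2i\sin(\pi\beta y)$, giving the stated kernel) one must also account for the cut of $\Phi(x+\cdot)$ on $[0,1-x]$, whose contribution folds back into the answer via the symmetry $\Theta_\beta(x)+\Theta_\beta(1-x)=1$, while the jumps of $w^{\beta-1}$ and $\Phi$ cancel each other on $(-\infty,-x)$. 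These are repairable bookkeeping issues rather than gaps in the idea, and your fallback of verifying the generalized Stieltjes transform of the candidate measure and invoking injectivity is a legitimate way to avoid them entirely; for the purposes of this paper, the citation suffices.
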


\begin{proposition}\label{mu_1} The measure $m_1^\beta$ is absolutely continuous with density $\vartheta_\beta=(\Theta_\beta)'$ given by
\begin{equation}\label{vartheta}\vartheta_\beta(x)=\beta e^\beta\int_0^x(x-y)^{\beta-1}\cdot y^{-\beta y} \cdot (1-y)^{-\beta(1-y)} \cdot\left[\cos(\pi\beta y)-\frac1\pi\sin(\pi\beta y)\cdot\log\frac y{1-y}\right]\, dy.\end{equation}
\end{proposition}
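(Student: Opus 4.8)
The plan is to obtain $\vartheta_\beta$ simply by differentiating the second representation of $\Theta_\beta$ given in the preceding Lemma, since $m_1^\beta$ is by definition the measure whose distribution function is $\Theta_\beta$, and absolute continuity will follow once the derivative is shown to exist as a locally integrable function. Write $\Theta_\beta(x)=\frac{e^\beta}{\pi}\int_0^x F(x-y)\,G(y)\,dy$ where $F(u)=u^{\beta-1}$ and $G(y)=y^{-\beta y}(1-y)^{-\beta(1-y)}\sin(\pi\beta y)$. First I would note that $G$ is continuous and bounded on $[0,1]$ (the singularities of the exponents at $0$ and $1$ are only logarithmic, and $\sin(\pi\beta y)$ is harmless), so the integrand is dominated near $y=x$ by an integrable power $u^{\beta-1}$ with $\beta\in(0,1)$, making $\Theta_\beta$ well-defined and, as I will argue, absolutely continuous.

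The technical heart is the differentiation under the integral sign for a convolution-type integral with an integrable but singular kernel. I would substitute $u=x-y$ to rewrite $\Theta_\beta(x)=\frac{e^\beta}{\pi}\int_0^x u^{\beta-1}G(x-u)\,du$ and then differentiate in $x$. Because $u^{\beta-1}$ is not bounded at $u=0$ one cannot naively move $\frac{d}{dx}$ inside; instead I would first integrate by parts to transfer the singularity onto a genuinely differentiable factor. Writing $\int_0^x u^{\beta-1}G(x-u)\,du = \frac{1}{\beta}\Big[u^\beta G(x-u)\Big]_{u=0}^{u=x} + \frac{1}{\beta}\int_0^x u^\beta G'(x-u)\,du = \frac{x^\beta G(0)}{\beta} + \frac1\beta\int_0^x u^\beta G'(x-u)\,du$, where $G(0)=0$ (since $\sin 0 =0$), so in fact $\Theta_\beta(x)=\frac{e^\beta}{\pi\beta}\int_0^x u^\beta G'(x-u)\,du$. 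Now the kernel $u^\beta$ is continuous and bounded on $[0,x]$, $G'$ is continuous on $(0,1)$ with at worst a logarithmic singularity at the endpoints coming from $\frac{d}{dy}(-\beta y\log y)=-\beta(\log y+1)$ and similarly at $1$, and these are still locally integrable; hence the standard Leibniz rule applies, giving $\Theta_\beta'(x)=\frac{e^\beta}{\pi\beta}\Big[x^\beta G'(0) + \int_0^x u^\beta G''(x-u)\,du\Big]$ — but I would rather keep the convolution in the original form and differentiate $\int_0^x u^{\beta-1}G(x-u)\,du$ directly now that I know the boundary term vanishes, obtaining $\frac{d}{dx}\int_0^x u^{\beta-1}G(x-u)\,du = \int_0^x u^{\beta-1}G'(x-u)\,du$ (the boundary contribution at $u=x$ is $\lim_{u\to 0}u^{\beta-1}G(u)=0$ because $G(u)\sim \pi\beta u$). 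Substituting back $y=x-u$ yields $\vartheta_\beta(x)=\frac{e^\beta}{\pi}\int_0^x (x-y)^{\beta-1}G'(y)\,dy$, and computing $G'(y)=\frac{d}{dy}\big[y^{-\beta y}(1-y)^{-\beta(1-y)}\sin(\pi\beta y)\big] = y^{-\beta y}(1-y)^{-\beta(1-y)}\big[\pi\beta\cos(\pi\beta y)-\beta\log\frac{y}{1-y}\cdot\sin(\pi\beta y)\big]$ — using $\frac{d}{dy}\log\big(y^{-\beta y}(1-y)^{-\beta(1-y)}\big)=-\beta\log\frac{y}{1-y}$ — gives exactly formula \eqref{vartheta} after cancelling the factor $\pi$ against $\pi\beta$ and extracting $\beta$.

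The main obstacle I anticipate is justifying the differentiation rigorously in the presence of the singular factor $(x-y)^{\beta-1}$ near $y=x$: one must be careful that the boundary term produced when moving the derivative past the variable upper limit genuinely vanishes (it does, precisely because $G$ vanishes linearly at $0$), and that the resulting integral $\int_0^x (x-y)^{\beta-1}G'(y)\,dy$ still converges — here the new singularity of $G'$ at $y=0$ and $y=1$ is only logarithmic and so harmless against the integrable power, and the singularity of $(x-y)^{\beta-1}$ at $y=x$ persists but remains integrable since $\beta>0$. A clean way to package all of this is to split $\int_0^x = \int_0^{x-\epsilon} + \int_{x-\epsilon}^x$, differentiate the first piece by ordinary Leibniz (no singularity), bound the second piece and its $x$-derivative uniformly using $|G(x-u)-G(0)|\le C u$ and $|G'(x-u)|\le C(1+|\log u|)$ on $(0,\epsilon)$, and let $\epsilon\to 0$; the absolute continuity of $\Theta_\beta$ then follows from the fundamental theorem of calculus applied to the locally integrable derivative $\vartheta_\beta$. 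This is routine but is the only place where care is actually required.
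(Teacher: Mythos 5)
Your proposal is correct and follows essentially the same route as the paper: rewrite $\Theta_\beta$ as a convolution, change variables so the singular factor $u^{\beta-1}$ sits at a fixed argument, differentiate under the integral using the vanishing of $G$ (the paper's $\eta$, up to the constant $\frac{e^\beta}{\beta\pi}$) at $0$ to kill the boundary term, and compute $G'$ via logarithmic differentiation. The paper's proof is just a terser version of this, with the same key observation that $\eta(x-y)\searrow 0$ as $y\nearrow x$.
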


\begin{figure}
\begin{center}
 \includegraphics[width=0.8\textwidth]{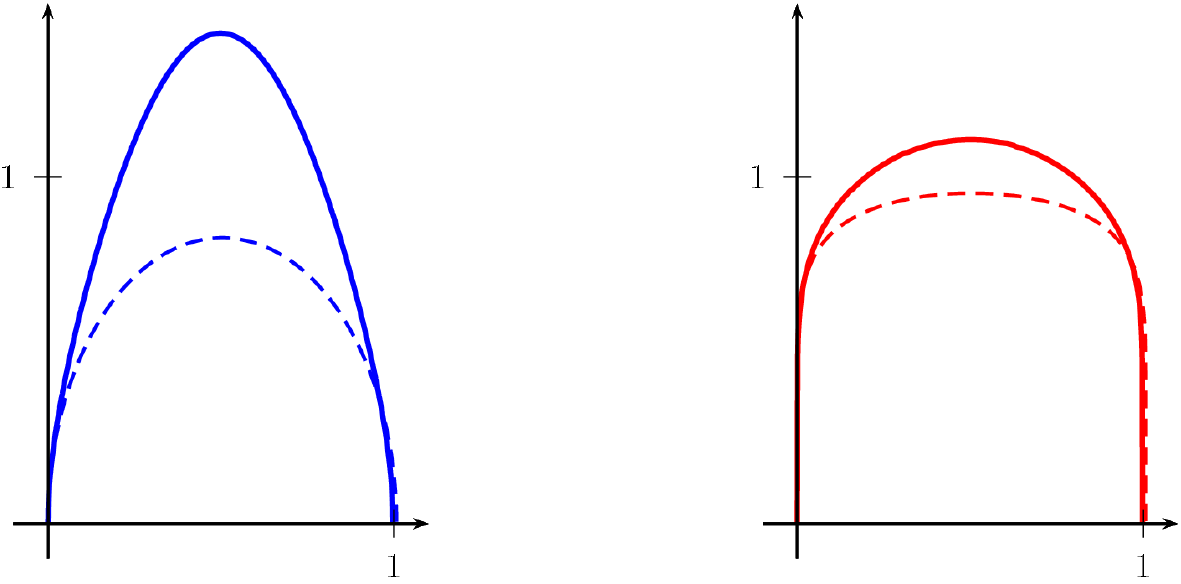}
\end{center}

\end{figure}



\begin{proof} The proof requires some care since  we are interested in the case $\beta<1$. Put
\begin{equation*}
\eta(y)=\frac{e^\beta}{\beta\pi}\cdot y^{-\beta y}\cdot (1-y)^{-\beta(1-y)}\cdot \sin(\pi\beta y)
\end{equation*}
in order to obtain
\begin{equation*}
\Theta_\beta(x)=\beta\int_0^x(x-y)^{\beta-1}\cdot \eta(y)\,dy=\beta\int_0^xy^{\beta-1}\cdot \eta(x-y)\,dy.
\end{equation*}
Differentiating the latter yields (since $\eta(x-y)\searrow0$ for $y\nearrow x$)
\begin{equation*}
\vartheta_\beta(x)=\beta\int_0^xy^{\beta-1}\cdot \eta'(x-y)\,dy=\beta\int_0^x(x-y)^{\beta-1}\cdot \eta'(y)\,dy.
\end{equation*}
Moreover, calculating $\eta'$ gives
\begin{equation*}\eta'(y)=
e^\beta\cdot y^{-\beta y} \cdot(1-y)^{-\beta(1-y)} \cdot\left[\cos(\pi\beta y)-\frac1\pi\sin(\pi\beta y)\cdot\log\frac y{1-y}\right].\end{equation*}
This proves the claim.
\end{proof}

\begin{proposition}
The density $\vartheta: [0,1]\to\mathbb R$ has the following properties
\begin{enumerate}
\item
$\vartheta$ is symmetric, i.e.  $\vartheta(x)=\vartheta(1-x)$;
\item $\vartheta$ is continuous on $[0,1]$ and $\mathcal C^\infty$ on $(0,1)$;
\item $\vartheta>0$ on $(0,1)$ and $\vartheta(0)=\vartheta(1)=0$;
\item $\vartheta(x)/\tilde\vartheta(x)\to1$ as $x\to0$ or $x\to1$ for
$\tilde\vartheta(x):=[e\cdot x(1-x)]^\beta$;
\item $\exists C\ge c>0$, e.g. $c=\cos(\pi\beta/2)$ and $C=4^\beta[1+\beta/e]$,  s.t. for all $x\in[0,1]$
\begin{equation}\label{tilde-theta}c \tilde\vartheta(x)\le\vartheta(x)\le C \tilde\vartheta(x).\end{equation}
\end{enumerate}

\end{proposition}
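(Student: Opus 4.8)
The plan is to work throughout with the explicit representation of $\vartheta$ from Proposition~\ref{mu_1}, rewritten as
\begin{equation*}
\vartheta(x)=\beta e^\beta\int_0^x(x-y)^{\beta-1}\,h(y)\,dy,\qquad h(y):=e^{\beta H(y)}\Big[\cos(\pi\beta y)-\tfrac1\pi\sin(\pi\beta y)\log\tfrac{y}{1-y}\Big],
\end{equation*}
where $H(y)=-y\log y-(1-y)\log(1-y)\in[0,\log 2]$ is the binary entropy, so that $y^{-\beta y}(1-y)^{-\beta(1-y)}=e^{\beta H(y)}$ and $1\le e^{\beta H(y)}\le 2^\beta$. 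A direct inspection shows that $h$ extends continuously to $[0,1)$ with $h(0)=1$ (each factor tends to $1$, using $y\log y\to0$), that it is $C^\infty$ on $(0,1)$, and that its derivatives grow at most like $y^{-(k-1)}$ as $y\to0$. The substitution $y=xt$ gives the localized form $\vartheta(x)=\beta e^\beta x^\beta\int_0^1(1-t)^{\beta-1}h(xt)\,dt$, which for $x<1$ carries the singularity of $h$ to $t=0$ only; this is the form I use for the local analysis.

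For \textbf{(2)}, on a compact $[a,b]\subset(0,1)$ the integrand $(1-t)^{\beta-1}h(xt)$ is dominated by $(\sup_{[0,b]}|h|)(1-t)^{\beta-1}\in L^1(0,1)$, so dominated convergence gives continuity of $\vartheta$ on $(0,1)$; differentiating under the integral, the chain-rule factors $t^k$ compensate the $y^{-(k-1)}$ growth of $h^{(k)}$ at $0$, yielding $\vartheta\in C^\infty(0,1)$. For the $x\to0$ part of \textbf{(4)}, dominated convergence in $\int_0^1(1-t)^{\beta-1}h(xt)\,dt$ with $h(xt)\to h(0)=1$ gives $\vartheta(x)/x^\beta\to\beta e^\beta\int_0^1(1-t)^{\beta-1}dt=e^\beta$; since $\tilde\vartheta(x)=e^\beta x^\beta(1-x)^\beta\sim e^\beta x^\beta$ as $x\to0$, this is exactly $\vartheta(x)/\tilde\vartheta(x)\to1$, and in particular $\vartheta(x)\to0$.

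The symmetry \textbf{(1)} is not read off the formula but inherited from Section~3: $m_1^\beta$ is invariant under $x\mapsto1-x$, so its density agrees a.e.\ with its reflection, and by the continuity just established $\vartheta(x)=\vartheta(1-x)$ on $(0,1)$. Consequently $\lim_{x\to1^-}\vartheta(x)=\lim_{s\to0^+}\vartheta(s)=0$, which completes \textbf{(2)}, supplies $\vartheta(0)=\vartheta(1)=0$ in \textbf{(3)}, and gives the $x\to1$ case of \textbf{(4)}. For the positivity in \textbf{(3)}: if $y\in(0,\tfrac12)$ then $\log\frac{y}{1-y}<0$, $\sin(\pi\beta y)>0$ and $\cos(\pi\beta y)>0$ (here $\beta<1$ enters), whence $h(y)>0$; thus $\vartheta(x)>0$ for $x\in(0,\tfrac12]$, and for $x\in[\tfrac12,1)$ one invokes \textbf{(1)}.

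For \textbf{(5)}, by \textbf{(1)} and the symmetry of $\tilde\vartheta$ it suffices to treat $x\in[0,\tfrac12]$, where only $h(y)$ with $y\in(0,\tfrac12)$ is needed. There, from below $e^{\beta H(y)}\ge1$ and $-\tfrac1\pi\sin(\pi\beta y)\log\frac y{1-y}\ge0$ give $h(y)\ge\cos(\pi\beta y)\ge\cos(\pi\beta/2)=:c$; from above $e^{\beta H(y)}\le2^\beta$, $\cos(\pi\beta y)\le1$, and $\sin(\pi\beta y)\le\pi\beta y$ together with $y\log\frac{1-y}{y}\le-y\log y\le\tfrac1e$ give $h(y)\le2^\beta(1+\beta/e)$. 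Inserting these into $\vartheta(x)=\beta e^\beta\int_0^x(x-y)^{\beta-1}h(y)\,dy$ and using $\int_0^x(x-y)^{\beta-1}dy=x^\beta/\beta$ yields $c\,e^\beta x^\beta\le\vartheta(x)\le2^\beta(1+\beta/e)\,e^\beta x^\beta$; since $2^{-\beta}\le(1-x)^\beta\le1$ on $[0,\tfrac12]$, the left-hand side is $\ge c\,\tilde\vartheta(x)$ and the right-hand side is $\le4^\beta(1+\beta/e)\,\tilde\vartheta(x)=:C\,\tilde\vartheta(x)$, which are precisely the asserted constants. The only genuinely delicate point is the $C^\infty$ assertion in \textbf{(2)}: one must justify differentiation under the integral sign despite the blow-up of the derivatives of $h$ at the endpoints, and this is exactly what the substitution $y=xt$ handles, by confining the relevant singularity to $t=0$, where $(1-t)^{\beta-1}$ is harmless and the $t^k$ factors tame $h^{(k)}$.
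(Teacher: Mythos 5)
Your proof is correct and follows essentially the same route as the paper: it works from the representation in Proposition 3.2, notes that the inner integrand (your $e^\beta h=\eta'$ in the paper's notation) tends to $e^\beta$ at $0$ and is bounded between $e^\beta\cos(\pi\beta/2)$ and $(2e)^\beta[1+\beta/e]$ on $[0,\tfrac12]$, and transfers everything to $[\tfrac12,1]$ by the symmetry of $m_1^\beta$. You merely supply more detail than the paper at two points (the substitution $y=xt$ justifying continuity, $C^\infty$ and the limit in (4), and the final comparison of $(2ex)^\beta$ with $\tilde\vartheta$ yielding $C=4^\beta[1+\beta/e]$), which is harmless.
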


\begin{proof}
(i) is proven in \cite{Regazzini-Guglielmi-Nunno}, Proposition 6. It also follows immediately from formula (4.1).

(ii) The smoothness inside $(0,1)$ follows from the representation formula in the previous Proposition. Continuity at the boundary is a consequence of the estimates in (iv).

(iii) is a consequence of (v).

(iv) Using the notations from the proof of the previous Proposition and the fact that $\eta'(y)\to e^\beta$ as $y\to0$  we obtain
$$\frac{\vartheta(x)}{(e\cdot x)^\beta}=\frac{\beta}{(e\cdot x)^\beta}\int_0^x (x-y)^{\beta-1}\cdot \eta'(y)\,dy\quad\to\quad
\frac{\beta}{x^\beta}\int_0^x (x-y)^{\beta-1}\,dy=1$$
as $x\to0$. Combined with the symmetry (i) this proves the claim.

(v) A lower estimate of the form
$$\vartheta(x)\ge (e\cdot x)^\beta\cdot \cos(\pi\beta/2)$$
for $x\le1/2$ follows from the estimate
$\eta'(y)\ge e^\beta\cdot \cos(\pi\beta/2)$,  valid for all $y\le1/2$,

On the other hand, the estimate
$$\eta'(y)\le (2e)^\beta\cdot\left[\cos(\pi\beta y)-\frac1\pi\sin(\pi\beta y)\cdot\log\frac y{1-y}\right]
\le (2e)^\beta\cdot\left[1+\frac{\beta}{e}\right],$$
again valid for $y\le1/2$, implies
$$\vartheta(x)\le (2ex)^\beta\cdot\left[1+\frac{\beta}{e}\right]$$
for all $x\le1/2$. Due to the symmetry of $\vartheta$ this proves the claim.
\end{proof}

\begin{remark} For all $x\in(0,1)$
\begin{itemize}
\item
$\Theta_\beta(x)\to x$ and $\vartheta_\beta(x)\to 1$ as $\beta\to0$
\item
$\Theta_\beta(x)\to \frac12 \cdot 1_{\{\frac12\}}(x) +1_{(\frac12,1]}(x)$ as $\beta\to\infty$.
\end{itemize}
\end{remark}
\end{section}

\begin{section}{The Measure $m_k^\beta$ in the Multivariate Case}

>From a technical point of view, the main result of this paper is the identification of the distribution
 of the random vector
\begin{equation}\hat\J_k(g)=\left( \int_0^1\Phi_k^{(1)}dg,\ldots,\int_0^1\Phi_k^{(k)}dg\right)\end{equation}
under $\Q^\beta$ where
\begin{equation}\label{Phi}\Phi_k^{(i)}(t):=\left\{
\begin{array}{ll}1,& \mbox{ for }t\in[0,\frac{i-1}k]\\
i-kt,\quad & \mbox{ for }t\in[\frac{i-1}k,\frac ik]\\
0, &\mbox{ for }t\in[\frac ik,1].
\end{array}
\right.
\end{equation}

\begin{figure}
\begin{center}
  \includegraphics[width=0.7\textwidth]{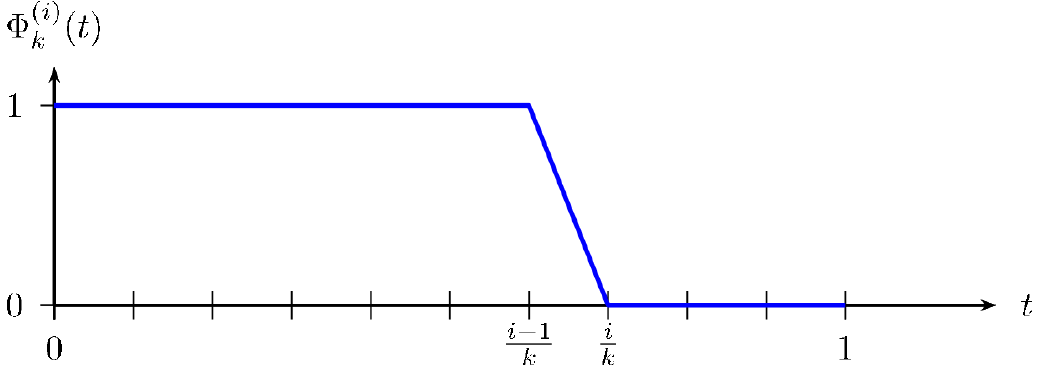}
\end{center}

\end{figure}



Note that integration by parts yields
$$\int_0^1\Phi_k^{(i)}(t)dg(t)=k\int_{\frac{i-1}k}^{\frac i k}g(t)dt$$
 for all $i=1,\ldots,k$ and all $g\in\G$.
Put $$m_k^\beta:=\left(\hat\J_k\right)_*\Q^\beta.$$

\begin{theorem} For any $\beta>0$ and $k\in\N$, $k\ge\beta$, the measure $m_k^\beta$ on $\mathbb R^k$ is absolutely continuous.
The density is strictly positive and continuous on the simplex
$$\Sigma_k:=\left\{(x_1,\ldots,x_k): 0<x_1<\ldots<x_k<1\right\}\subset\R^k$$
and vanishes on $\R^k\setminus \Sigma_k$.
For $x\in\Sigma_k$ it is given by
\begin{equation}\label{rho}
\rho_k^\beta(x_1,\ldots,x_k)=\frac{\Gamma(\beta)}{\Gamma(\beta/k)^k}
\int_{x_{k-1}}^{x_k}\ldots\int_{x_1}^{x_2}\prod_{i=1}^k
\left[\vartheta_{\beta/k}\left(\frac{x_i-y_{i-1}}{y_i-y_{i-1}}\right)\cdot\left(y_i-y_{i-1}\right)^{\beta/k-2}\right]dy_1\ldots dy_{k-1}
\end{equation}
(where $y_0:=0, y_k:=1$) with $\vartheta_\beta$ as defined in (\ref{vartheta}).
\end{theorem}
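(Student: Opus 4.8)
The plan is to deduce the multivariate formula (\ref{rho}) from the one-dimensional Proposition~\ref{mu_1} by conditioning on the values of $g$ at the equidistant grid points $t_i=i/k$ ($i=1,\dots,k-1$) and exploiting the fact that, conditionally on these values, the Dirichlet--Ferguson process $\Q^\beta$ disintegrates into independent rescaled copies of $\Q^{\beta/k}$ over the subintervals $[\tfrac{i-1}k,\tfrac ik]$. Concretely, writing $g(t)=\gamma_{t\beta}/\gamma_\beta$ for the standard Gamma process $(\gamma_t)$, put $y_i:=g_{i/k}$ (with $y_0:=0$, $y_k:=1$) and define
$$\tilde g^{(i)}(s):=\frac{g\!\big(\tfrac{i-1}k+\tfrac sk\big)-y_{i-1}}{y_i-y_{i-1}}=\frac{\gamma_{(i-1)\beta/k+s\beta/k}-\gamma_{(i-1)\beta/k}}{\gamma_{i\beta/k}-\gamma_{(i-1)\beta/k}},\qquad s\in[0,1],\ i=1,\dots,k.$$
Since the blocks $[(i-1)\beta/k,\,i\beta/k]$ of the Gamma process are independent, each $\tilde g^{(i)}$ is the normalized Gamma bridge over its block, hence distributed according to $\Q^{\beta/k}$; moreover $(\tilde g^{(1)},\dots,\tilde g^{(k)})$ are jointly independent elements of $\G$ and independent of the grid vector $(g_{1/k},\dots,g_{(k-1)/k})$ (this is the change-of-variables/disintegration property of $\Q^\beta$ under composition, cf.\ \cite{renstu},\cite{RYZ}). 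By (\ref{Dir-I}) applied to $t_i=i/k$, the grid vector has density $\tfrac{\Gamma(\beta)}{\Gamma(\beta/k)^k}\prod_{i=1}^k(y_i-y_{i-1})^{\beta/k-1}$ on $\{0<y_1<\dots<y_{k-1}<1\}$.

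Next I rewrite the components of $\hat\J_k$ in these coordinates. Using $\int_0^1\Phi_k^{(i)}\,dg=k\int_{(i-1)/k}^{i/k}g(t)\,dt$ from (\ref{Phi}) and substituting $t=\tfrac{i-1}k+\tfrac sk$,
$$\hat\J_k(g)_i=\int_0^1 g\!\big(\tfrac{i-1}k+\tfrac sk\big)\,ds=y_{i-1}+(y_i-y_{i-1})\,X^{(i)},\qquad X^{(i)}:=\int_0^1\tilde g^{(i)}(s)\,ds.$$
Integration by parts gives $X^{(i)}=1-\int_0^1 s\,d\tilde g^{(i)}(s)$, and the last integral is the mean of the random probability measure $d\tilde g^{(i)}$, which under $\Q^{\beta/k}$ has law $m_1^{\beta/k}$ with density $\vartheta_{\beta/k}$ by Proposition~\ref{mu_1}. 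By the symmetry $\vartheta_{\beta/k}(x)=\vartheta_{\beta/k}(1-x)$, each $X^{(i)}$ itself has density $\vartheta_{\beta/k}$ on $(0,1)$, and the $X^{(i)}$ are independent of one another and of $(y_1,\dots,y_{k-1})$. (The hypothesis $k\ge\beta$ places the parameter $\beta/k$ in the range $(0,1]$ for which these one-dimensional results are available.)

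It remains to assemble the density. The vector $\big((y_1,\dots,y_{k-1}),(X^{(1)},\dots,X^{(k)})\big)$ has joint density $\tfrac{\Gamma(\beta)}{\Gamma(\beta/k)^k}\prod_i(y_i-y_{i-1})^{\beta/k-1}\prod_i\vartheta_{\beta/k}(X^{(i)})$; for fixed $y$, the map $X^{(i)}\mapsto x_i=y_{i-1}+(y_i-y_{i-1})X^{(i)}$ has Jacobian $\prod_i(y_i-y_{i-1})^{-1}$, so the joint density of $\big((y_1,\dots,y_{k-1}),(x_1,\dots,x_k)\big)$ equals $\tfrac{\Gamma(\beta)}{\Gamma(\beta/k)^k}\prod_i(y_i-y_{i-1})^{\beta/k-2}\vartheta_{\beta/k}\!\big(\tfrac{x_i-y_{i-1}}{y_i-y_{i-1}}\big)$, and integrating out $y_1,\dots,y_{k-1}$ yields precisely (\ref{rho}). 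The constraints $X^{(i)}\in(0,1)$ translate into $y_{i-1}<x_i<y_i$, i.e.\ $x_{i-1}<y_{i-1}<x_i$ for each $i$; these force $(x_1,\dots,x_k)\in\Sigma_k$ and identify the iterated integration limits $\int_{x_{i-1}}^{x_i}dy_{i-1}$ as written. Since the laws of $(y_1,\dots,y_{k-1})$ and of each $X^{(i)}$ are absolutely continuous, $m_k^\beta$ is absolutely continuous with density $\rho_k^\beta\,\mathbf 1_{\Sigma_k}$, and strict positivity on $\Sigma_k$ follows from $\vartheta_{\beta/k}>0$ on $(0,1)$.

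The delicate point — which I would treat last and carefully — is to verify that the right-hand side of (\ref{rho}) really defines a \emph{continuous} function on $\Sigma_k$, so that the formal manipulation above genuinely identifies a density. As some $y_i$ tends to $x_i$ or to $x_{i+1}$, the argument $\tfrac{x_i-y_{i-1}}{y_i-y_{i-1}}$, respectively $\tfrac{x_{i+1}-y_i}{y_{i+1}-y_i}$, of $\vartheta_{\beta/k}$ tends to $1$ or $0$; invoking the two-sided bound $c\,[e\,u(1-u)]^{\beta/k}\le\vartheta_{\beta/k}(u)\le C\,[e\,u(1-u)]^{\beta/k}$ from (\ref{tilde-theta}) one checks that the integrand in (\ref{rho}) is dominated by an integrable function locally uniformly in $(x_1,\dots,x_k)\in\Sigma_k$. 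Finiteness of the integral follows, and dominated convergence together with the continuity of $\vartheta_{\beta/k}$ gives continuity of $\rho_k^\beta$ on $\Sigma_k$.
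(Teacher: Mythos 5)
Your argument is correct and follows essentially the same route as the paper: decompose $\int_0^1\Phi_k^{(i)}dg = y_{i-1}+(y_i-y_{i-1})\cdot(\text{mean of a rescaled bridge})$, use that conditionally on the grid vector the rescaled bridges are i.i.d.\ $\Q^{\beta/k}$ with means of density $\vartheta_{\beta/k}$ (via the symmetry $\vartheta(x)=\vartheta(1-x)$), combine with the Dirichlet law of the grid from (\ref{Dir-I}), and change variables. The only differences are cosmetic: you justify the independence of the bridges directly from the block structure of the Gamma process where the paper cites \cite{renstu}, Prop.~3.15, and you are somewhat more explicit than the paper about finiteness and continuity of the resulting integral (which the paper defers to the upper bound in the following proposition).
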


\begin{proof}  Let us start with the simple observation that
$$\int_0^1\Phi_k^{(i)}dg=g\left(\frac{i-1}k\right)+
\left[g\left(\frac{i}k\right)-g\left( \frac{i-1}k\right)\right]\cdot\int_0^1 (1-t)d\tilde g_i(t)$$
with $$\tilde g_i(t):=\frac{g\left(\frac{t+i-1}k\right)-g\left( \frac{i-1}k\right)}{g\left(\frac{i}k\right)-g\left( \frac{i-1}k\right)}.$$
Now the crucial fact is that, conditioned on $\left(g\left(\frac{1}k\right),\ldots,g\left( \frac{k-1}k\right)\right)$, the processes
$\left(\tilde g_i(t)\right)_{t\in[0,1]}$ for $i=1,\ldots,k$ are independent and distributed according to $\Q^{\beta/k}$. (This can be deduced from the explicit representation formula for the finite dimensional distributions
(\ref{Dir-I}), see also \cite{renstu}, Proposition 3.15).

\begin{figure}
\begin{center}
  \includegraphics[width=0.7\textwidth]{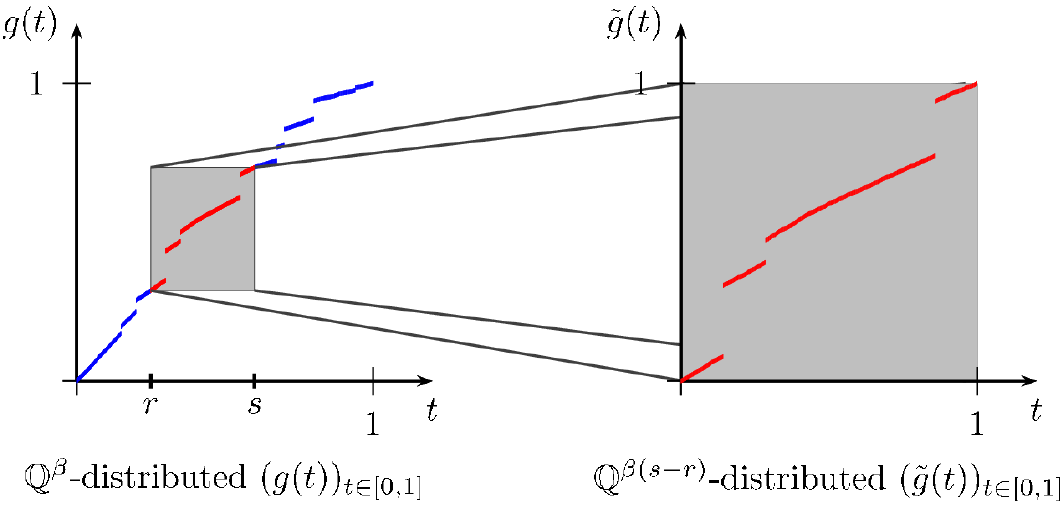}
\end{center}

\end{figure}



Moreover, according to Proposition \ref{mu_1} the distribution of $\int_0^1 (1-t)d\tilde g_i(t)$ for $\Q^{\beta/k}$-distributed $\left(\tilde g_i(t)\right)_{t\in[0,1]}$ is given by $dm_1^{\beta/k}(x)=\vartheta_{\beta/k}(x)\,dx$.

Finally, the distribution of the random vector $\left(g\left(\frac{1}k\right),\ldots,g\left( \frac{k-1}k\right)\right)$ is given explicitly by the Dirichlet distribution, see formula (\ref{Dir-I}).

\medskip

Putting these informations together we obtain for each bounded Borel function $U$ on $\R^k$
\begin{eqnarray*}
\lefteqn{\int_\G U\left(\left( \int_0^1 \Phi_k^{(i)}d g\right)_{i=1,\ldots,k}\right)\, d\Q^\beta}\\
&=&
\int_\G U\left(\left( g\left(\frac{i-1}k\right)+
\left[g\left(\frac{i}k\right)-g\left( \frac{i-1}k\right)\right]\cdot\int_0^1 (1-t)d\tilde g_i(t)\right)_{i=1,\ldots,k}\right)\, d\Q^\beta\\
&=&
\frac{\Gamma(\beta)}{\Gamma(\beta/k)^k} \int_{\Sigma_{k-1}}\left[\int_\G\ldots\int_\G U\left(\left( y_{i-1}+\left[y_i-y_{i-1}\right]\cdot\int_0^1 (1-t)d\tilde g_i(t)\right)_{i=1,\ldots,k}\right)\right.\\
&&\qquad\qquad\qquad \left. d\Q^{\beta/k}(\tilde g_1)\ldots  d\Q^{\beta/k}(\tilde g_k)\right]
\prod_{i=1}^k\left(y_i-y_{i-1}\right)^{\beta/k-1}\, dy_1\ldots dy_{k-1}\\
&=&
\frac{\Gamma(\beta)}{\Gamma(\beta/k)^k} \int_{\Sigma_{k-1}}\left[\int_0^1\ldots\int_0^1 U\left(\left(
y_{i-1}+\left[y_i-y_{i-1}\right]\cdot z_i \right)_{i=1,\ldots,k}\right)\right.\\
&&\qquad\qquad\qquad \left. \prod_{i=1}^k \vartheta_{\beta/k}(z_i)\, dz_1\ldots d z_k\right]
\prod_{i=1}^k\left(y_i-y_{i-1}\right)^{\beta/k-1}\, dy_1\ldots dy_{k-1}\\
&=&
\frac{\Gamma(\beta)}{\Gamma(\beta/k)^k} \int_{\Sigma_{k-1}}\left[\int_{y_{k-1}}^{y_k}\ldots\int_{y_0}^{y_1} U\left(\left(x_i \right)_{i=1,\ldots,k}\right)\right.\\
&&\qquad\qquad\qquad \left. \prod_{i=1}^k \left[\vartheta_{\beta/k}\left(\frac{x_i-y_{i-1}}{y_i-y_{i-1}}\right)
\cdot \left(y_i-y_{i-1}\right)^{\beta/k-2}
\right]
dx_1,\ldots dx_k \right]\, dy_1\ldots dy_{k-1}\\
&=&
\frac{\Gamma(\beta)}{\Gamma(\beta/k)^k} \int_{\Sigma_{k}}\left[\int_{x_{k-1}}^{x_k}\ldots\int_{x_1}^{x_2} U\left(\left(x_i \right)_{i=1,\ldots,k}\right)\right.\\
&&\qquad\qquad\qquad \left. \prod_{i=1}^k \left[\vartheta_{\beta/k}\left(\frac{x_i-y_{i-1}}{y_i-y_{i-1}}\right)
\cdot \left(y_i-y_{i-1}\right)^{\beta/k-2}
\right]
dy_1,\ldots dy_{k-1} \right]\, dx_1\ldots dx_{k}\\
&=&\int_{\Sigma_k} U\left(x_1,\ldots,x_k\right) \cdot \rho_k^\beta\left(x_1,\ldots,x_k\right) \, dx_1\ldots dx_k
\end{eqnarray*}
with $\rho_k^\beta$ as defined above (and always with $y_0:=0, y_k:=1$).

\medskip

The continuity and strict positivity of $\rho_k^\beta$ on $\Sigma_k$ follows from the explicit representation formula and from the fact that $\vartheta_{\beta/k}$ is smooth and $>0$ on $(0,1)$.
\end{proof}

\begin{remark}
The densities $\rho_k^\beta$ have the following hierarchical structure:
\begin{equation}
\rho_k(x_1,x_2,\ldots,x_k)=2^k\int_{\R^k}\rho_{2k}^\beta(x_1-\xi_1, x_1+\xi_1, \ldots, x_k-\xi_k,x_k+\xi_k)d\xi_1\ldots d\xi_k.
\end{equation}
This is of course a consequence of the fact that they are obtained via projection from the same measure $\Q^\beta$ and that
$$\Phi_k^{(i)}=\frac12\left( \Phi_{2k}^{(2i-1)}+\Phi_{2k}^{(2i)}\right)$$
for all $k\in\N$ and all $i=1,\ldots,k$.
Thus for all $U$ on $\R^k$
\begin{eqnarray*}
\int_{\R^k} U(x)\rho_k^\beta(x)dx&=&
\int_{\R^{2k}} U\left(\frac{y_1+y_2}2,\ldots, \frac{y_{2k-1}+y_{2k}}2\right)\rho_{2k}^\beta(y)dy\\
&=&
\int_{\R^k}U(x)\left[2^k\int_{\R^k}\rho_{2k}^\beta(x_1-\xi_1, x_1+\xi_1, \ldots, x_k-\xi_k,x_k+\xi_k)d\xi_1\ldots d\xi_k\right]dx.
\end{eqnarray*}

\end{remark}

\begin{proposition}
{\bf (i) \ } There exists a constant $C=C_{\beta,k}$ such that for all $x\in\Sigma_k$:
\begin{equation}\rho_k^\beta(x_1,\ldots,x_k)\le C\cdot
\left[x_1(1-x_k)\right]^{\beta/(2k)-1}\cdot\prod_{i=2}^{k}\left(x_i-x_{i-1}\right)^{\beta/k-1}.\end{equation}
{\bf (ii)} \  For all $l\in\{1,\ldots,k-1\}$ there exist continuous functions $\gamma_1>0$ on $\Sigma_l$ and $\gamma_2>0$ on $\Sigma_{k-l}$ such that
\begin{equation}
\rho_k^\beta(x)\ge \gamma_1(x_1,\ldots, x_l)\cdot\gamma_2(x_{l+1},\ldots,x_k)\cdot (x_{l+1}-x_l)^{2\beta/k-1}
\end{equation}
for all $x\in \Sigma_k$ with $|x_{l+1}-x_l|\le\frac14\min\{|x_{l}-x_{l-1}|,|x_{l+2}-x_{l+1}|\}$.
\end{proposition}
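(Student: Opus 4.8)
The plan is to work directly with the integral representation~(\ref{rho}). Write $\gamma:=\beta/k$, $\delta_i:=x_i-x_{i-1}$, and adopt the convention $x_0:=0$, $x_{k+1}:=1$, $\delta_1:=x_1$, $\delta_{k+1}:=1-x_k$. In~(\ref{rho}) the integration variables satisfy $y_i\in(x_i,x_{i+1})$, and it is convenient to use the ``gap variables'' $a_i:=x_i-y_{i-1}>0$, $b_i:=y_i-x_i>0$: then $a_1=x_1$ and $b_k=1-x_k$ are fixed, $a_i+b_i=y_i-y_{i-1}$, $a_{j+1}=\delta_{j+1}-b_j$, the change $y_j\mapsto b_j\in(0,\delta_{j+1})$ has Jacobian one, and the argument of $\vartheta_\gamma$ in the $i$-th factor is $a_i/(a_i+b_i)$, so there $z(1-z)=a_ib_i(a_i+b_i)^{-2}$.

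For~\textbf{(i)} I would substitute the upper estimate $\vartheta_\gamma(z)\le C_\gamma\,[e\,z(1-z)]^\gamma$ from Proposition~3.3(v) (applied with parameter $\beta/k$ in place of $\beta$), so that the $i$-th factor of~(\ref{rho}) is at most $C_\gamma e^\gamma\,a_i^\gamma b_i^\gamma(a_i+b_i)^{-\gamma-2}$, and then decouple the variables by the elementary bound $(a+b)^{-\gamma-2}\le a^{-\gamma/2-1}b^{-\gamma/2-1}$ (i.e.\ $(a+b)^2\ge ab$ raised to the positive power $\gamma/2+1$). What survives is $C_{\beta,k}\,[x_1(1-x_k)]^{\gamma/2-1}$ times $\int_0^{\delta_2}\cdots\int_0^{\delta_k}\prod_{j=1}^{k-1}b_j^{\gamma/2-1}(\delta_{j+1}-b_j)^{\gamma/2-1}\,db_1\cdots db_{k-1}$, which factorizes into one--dimensional Beta integrals $\int_0^{\delta_{j+1}}b^{\gamma/2-1}(\delta_{j+1}-b)^{\gamma/2-1}\,db=\delta_{j+1}^{\gamma-1}\mathrm B(\gamma/2,\gamma/2)$, all finite since $\gamma/2>0$. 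Collecting the powers $\delta_{j+1}^{\gamma-1}=(x_{j+1}-x_j)^{\gamma-1}$ for $j=1,\dots,k-1$ gives the claim, with $C_{\beta,k}=\Gamma(\beta)\,C_\gamma^{\,k}e^{\beta}\,\mathrm B(\gamma/2,\gamma/2)^{\,k-1}/\Gamma(\gamma)^k$.

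For~\textbf{(ii)} I would obtain the lower bound by restricting the domain of integration in~(\ref{rho}) (keeping the prefactor $\Gamma(\beta)/\Gamma(\gamma)^k$): integrate $y_{l-1}$ only over $(x_l-\tfrac12\delta_l,\,x_l)$, i.e.\ close to $x_l$; $y_{l+1}$ only over $(x_{l+1},\,x_{l+1}+\tfrac12\delta_{l+2})$, close to $x_{l+1}$; $y_l$ over its full range $(x_l,x_{l+1})$; and each remaining $y_j$ over the fixed middle interval $(x_j+\tfrac14\delta_{j+1},\,x_j+\tfrac34\delta_{j+1})$. On this region the following hold. First, for every $i\notin\{l,l+1\}$ the ratio $\tfrac{x_i-y_{i-1}}{y_i-y_{i-1}}$ stays in a compact subinterval of $(0,1)$ whose endpoints depend only on the neighbouring $x$'s (one checks this directly from the chosen intervals; it is precisely the fact that $y_{l-1}$ is near $x_l$, resp.\ $y_{l+1}$ near $x_{l+1}$, that keeps the $(l-1)$-st, resp.\ $(l+2)$-nd, ratio away from $0$ and $1$), and $0<y_i-y_{i-1}\le x_{i+1}-x_{i-1}$; since $\vartheta_\gamma$ is continuous and $>0$ on $(0,1)$ (Proposition~3.3(ii)--(iii)) and $\gamma-2<0$, the $i$-th factor is $\ge m_i(x)>0$ with $m_i$ continuous and depending only on coordinates of index $\le l$ if $i\le l-1$, resp.\ of index $\ge l+1$ if $i\ge l+2$. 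As these factors do not involve $y_l$, integrating them out over the $y_j$ with $j\notin\{l-1,l,l+1\}$ contributes a constant times $\gamma_1(x_1,\dots,x_l)\,\gamma_2(x_{l+1},\dots,x_k)$, with $\gamma_1>0$ continuous on $\Sigma_l$ and $\gamma_2>0$ continuous on $\Sigma_{k-l}$. Second, for $i\in\{l,l+1\}$ I would use the lower estimate $\vartheta_\gamma(z)\ge c_\gamma[e\,z(1-z)]^\gamma$; with $A:=x_l-y_{l-1}$, $s:=y_l-x_l$, $B:=y_{l+1}-x_{l+1}$ and $\delta:=x_{l+1}-x_l$ these two factors together are $\ge$ a constant times $A^\gamma s^\gamma(A+s)^{-\gamma-2}(\delta-s)^\gamma B^\gamma(B+\delta-s)^{-\gamma-2}$. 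Here the hypothesis $\delta\le\tfrac14\min\{\delta_l,\delta_{l+2}\}$ enters: since then $s<\delta\le\tfrac14\delta_l$, the substitution $A=su$ gives $\int_0^{\delta_l/2}A^\gamma(A+s)^{-\gamma-2}\,dA=s^{-1}\int_0^{\delta_l/(2s)}u^\gamma(1+u)^{-\gamma-2}\,du\ge c_1\,s^{-1}$ with $c_1:=\int_0^2u^\gamma(1+u)^{-\gamma-2}\,du>0$, and likewise $\int_0^{\delta_{l+2}/2}B^\gamma(B+\delta-s)^{-\gamma-2}\,dB\ge c_1(\delta-s)^{-1}$. What remains is $\int_0^\delta s^{\gamma-1}(\delta-s)^{\gamma-1}\,ds=\mathrm B(\gamma,\gamma)\,\delta^{2\gamma-1}$, finite since $\gamma>0$, which produces the asserted factor $(x_{l+1}-x_l)^{2\beta/k-1}$. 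The boundary values $l\in\{1,k-1\}$, where $y_0=0$ resp.\ $y_k=1$ is deterministic so that one of the two singular integrations is absent, require a separate and slightly different discussion under the convention $x_0=0$, $x_{k+1}=1$.

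The step I expect to be the real obstacle is the coupling of neighbouring integration variables through the factors $(y_i-y_{i-1})^{\gamma-2}=(a_i+b_i)^{\gamma-2}$ and through the ratios: a naive estimate leaves a genuinely $(k-1)$-fold integral. In~(i) this is exactly what the splitting $(a+b)^{-\gamma-2}\le a^{-\gamma/2-1}b^{-\gamma/2-1}$ overcomes, turning the integral into a product of $k-1$ Beta integrals. In~(ii) the delicate point is to choose the restricted region so that precisely the two singular factors $s^{-1}$ and $(\delta-s)^{-1}$ --- responsible for the exponent $2\gamma-1$ rather than $2\gamma$ --- survive near the short gap $(x_l,x_{l+1})$, while every remaining factor is simultaneously bounded below by a quantity that splits as (a function of $x_1,\dots,x_l$)$\,\times\,$(a function of $x_{l+1},\dots,x_k$); the bookkeeping of which coordinates each lower bound depends on is the only thing to handle with care.
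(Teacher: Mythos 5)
Your proposal is correct and follows essentially the same strategy as the paper: part (i) by decoupling the coupled factors via $(a+b)^{-p}\le (ab)^{-p/2}$ so that the $(k-1)$-fold integral splits into one-dimensional Beta integrals, and part (ii) by restricting the domain of integration near the short gap and invoking the two-sided comparison $c\,\tilde\vartheta\le\vartheta\le C\,\tilde\vartheta$. The only cosmetic difference is in (ii), where you extract the singular factors $s^{-1}$ and $(\delta-s)^{-1}$ by a scaling substitution over intervals of length $\tfrac12\delta_l$ and $\tfrac12\delta_{l+2}$, whereas the paper confines $y_{l-1}$ and $y_{l+1}$ to intervals of length $\delta$ and bounds the integrand crudely by powers of $\delta$; both routes produce the same exponent $2\beta/k-1$.
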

\begin{proof}
(i) Using the fact that $\vartheta_{\beta/k}\le C$ and the trivial estimate $(a+b)^{-p}\le 2^{-p}\cdot a^{-p/2}\cdot b^{-p/2}$ ($\forall a,b,p>0$) we obtain
\begin{eqnarray*}
\lefteqn{\rho_k^\beta(x_1,\ldots,x_k)}\\
&\le& C^k\cdot
\frac{\Gamma(\beta)}{\Gamma(\beta/k)^k}
\int_{x_{k-1}}^{x_k}\ldots\int_{x_1}^{x_2}\prod_{i=1}^k
\left(y_i-y_{i-1}\right)^{\beta/k-2}dy_1\ldots dy_{k-1}\\
&\le& C^k\cdot
\frac{\Gamma(\beta)}{\Gamma(\beta/k)^k}\cdot 2^{\beta-2k}
\int_{x_{k-1}}^{x_k}\ldots\int_{x_1}^{x_2}\prod_{i=1}^k
\left(y_i-x_{i}\right)^{\beta/(2k)-1}\cdot\left(x_i-y_{i-1}\right)^{\beta/(2k)-1}\, dy_1\ldots dy_{k-1}\\
&=&
C^k\cdot \frac{\Gamma(\beta)}{\Gamma(\beta/k)^k}
\left[\frac{\Gamma(\beta/(2k))^2}{\Gamma(\beta/k)}\right]^{k-1}\cdot 2^{\beta-2k}\cdot
\left[x_1(1-x_k)\right]^{\beta/(2k)-1}\cdot
\prod_{i=2}^{k}
\left(x_i-x_{i-1}\right)^{\beta/k-1}.
\end{eqnarray*}

\bigskip

(ii) We assume $k>2\beta$ and $2\le l\le k-2$. (The cases $l=1$ and $l=k-1$ require some modifications.)
Fix $x\in\Sigma_k$ as above and put $\delta:=|x_{l+1}-x_l|$. In the representation formula (4.3) for $\rho_k^\beta$, restrict the interval of integration for $dy_{l-1}$ from $[x_{l-1},x_l]$ to $[x_l-2\delta,x_l-\delta]$ and that for $dy_{l+1}$ from $[x_{l+1},x_{l+2}]$ to $[x_{l+1}+\delta,x_{l+1}+2\delta]$. Moreover, use the lower estimate (\ref{tilde-theta}) for the $\vartheta_{\beta/k}\left(\frac{x_i-y_{i-1}}{y_i-y_{i-1}}\right)$ for $i\in\{l,l+1\}$ to obtain the estimate
\begin{eqnarray*}
\lefteqn{\rho_k^\beta(x_1,\ldots,x_k)}\\
&\ge& C\cdot\int_{x_1}^{x_2}\ldots
\int_{x_{l-2}}^{x_{l-1}}
\int_{x_l-2\delta}^{x_l-\delta}
\int_{x_l}^{x_{l+1}}
\int_{x_{l+1}+\delta}^{x_{l+1}+2\delta}
\int_{x_{l+2}}^{x_{l+3}}\ldots
\int_{x_{k-1}}^{x_{k}}\\
&&\qquad\qquad
\prod_{i\in\{1,\ldots,l-1\}\cup\{l+2,\ldots,k\}}
\left[\vartheta_{\beta/k}\left(\frac{x_i-y_{i-1}}{y_i-y_{i-1}}\right)\cdot\left(y_i-y_{i-1}\right)^{\beta/k-2}\right]\cdot\\
&&\qquad\qquad\cdot (x_l-y_{l-1})^{\beta/k}\cdot(y_l-x_{l})^{\beta/k}\cdot(x_{l+1}-y_{l})^{\beta/k}\cdot(y_{l+1}-x_{l+1})^{\beta/k}\cdot\\
&&\qquad\qquad\cdot(y_l-y_{l-1})^{-\beta/k-2}\cdot(y_{l+1}-y_{l})^{-\beta/k-2}\,
dy_1\ldots dy_{k-1}.
\end{eqnarray*}
Here and in the rest of the proof $C$ always denotes a constant $>0$ changing from line to line. Now we use the lower estimates
\begin{eqnarray*}
(x_l-y_{l-1})^{\beta/k}\ge \delta^{\beta/k}&,&\qquad(y_{l+1}-x_{l+1})^{\beta/k}\ge \delta^{\beta/k},
\\
(y_l-y_{l-1})^{-\beta/k-2}\ge (3\delta)^{-\beta/k-2}&,&\quad(y_{l+1}-y_{l})^{-\beta/k-2}\ge (3\delta)^{-\beta/k-2},\\
(y_{l-1}-y_{l-2})^{\beta/k}\ge(x_l-y_{l-2})^{\beta/k}&,&\quad(y_{l+2}-y_{l+1})^{\beta/k}\ge(y_{l+2}-x_{l+1})^{\beta/k},\\
\vartheta_{\beta/k}\left(\frac{x_{l-1}-y_{l-2}}{y_{l-1}-y_{l-2}}\right)\ge\vartheta_{\beta/k}\left(\frac{x_{l-1}-y_{l-2}}{x_{l}-y_{l-2}}\right)&,&
\vartheta_{\beta/k}\left(\frac{x_{l+2}-y_{l+1}}{y_{l+2}-y_{l+1}}\right)\ge\vartheta_{\beta/k}\left(\frac{y_{l+2}-x_{l+2}}{y_{l+2}-x_{l+1}}\right)
\end{eqnarray*}
valid for all $y_{l-1},y_l,y_{+1}$ in the restricted domains of integration. Moreover, we put
\begin{eqnarray*}
\gamma_1(x_1,\ldots,x_l)&:=&\int_{x_1}^{x_2}\ldots
\int_{x_{l-2}}^{x_{l-1}}
\prod_{i\in\{1,\ldots,l-2\}}
\left[\vartheta_{\beta/k}\left(\frac{x_i-y_{i-1}}{y_i-y_{i-1}}\right)\cdot\left(y_i-y_{i-1}\right)^{\beta/k-2}\right]\cdot\\
&&\qquad\qquad\cdot\vartheta_{\beta/k}\left(\frac{x_{l-1}-y_{l-2}}{x_{l}-y_{l-2}}\right)\cdot(x_l-y_{l-2})^{\beta/k}\, dy_{l-2}\ldots dy_1
\end{eqnarray*}
and similarly
\begin{eqnarray*}
\gamma_2(x_{l+1},\ldots,x_k)&:=&\int_{x_{l+2}}^{x_{l+3}}\ldots
\int_{x_{k-1}}^{x_{k}}
\prod_{i\in\{l+3,\ldots,k\}}
\left[\vartheta_{\beta/k}\left(\frac{x_i-y_{i-1}}{y_i-y_{i-1}}\right)\cdot\left(y_i-y_{i-1}\right)^{\beta/k-2}\right]\cdot\\
&&\qquad\qquad\cdot
\vartheta_{\beta/k}\left(\frac{y_{l+2}-x_{l+2}}{y_{l+2}-x_{l+1}}\right)\cdot(y_{l+2}-x_{l+1})^{\beta/k}\, dy_{k-1}\ldots dy_{l+2}.
\end{eqnarray*}
Then we obtain
\begin{eqnarray*}
\lefteqn{\rho_k^\beta(x_1,\ldots,x_k)}\\
&\ge& C\cdot\gamma_1(x_1,\ldots,x_l)\cdot\gamma_2(x_{l+1},\ldots,x_k)\cdot\\
&&\cdot \delta^{-4}\cdot\int_{x_l-2\delta}^{x_l-\delta}
\int_{x_l}^{x_{l+1}}
\int_{x_{l+1}+\delta}^{x_{l+1}+2\delta}(y_l-x_l)^{\beta/k}\cdot(x_{l+1}-y_l)^{\beta/k}\,dy_{l-1}dy_ldy_{l+1}\\
&=&C\cdot\gamma_1(x_1,\ldots,x_l)\cdot\gamma_2(x_{l+1},\ldots,x_k)\cdot\delta^{2\beta/k-1}.
\end{eqnarray*}
This proves the claim.
\end{proof}

Remark: We do not know whether the exponent $2\beta/k-1$ in the previous lower estimate can  be improved to $\beta/k-1$. In the upper estimate, the exponent $\beta/k-1$ is certainly optimal.
\end{section}
\begin{section}{Projections, Isomorphisms, Approximations}

\begin{subsection}{Finite Dimensional Projections}

For each linear subspace $H\subset L^2(\eI,\leb)$ let $\Cyl^1_H(\G)$
denote the set of all  functions $u: \G\to\R$ which can be written as
 $u(g)=U\left(\langle g,\psi_1\rangle,\ldots,\langle g,\psi_n\rangle\right)$ with $n\in\mathbb{N}$, $U\in \C^1(\R^n,\R)$ and
$\psi_1,\ldots,\psi_n\in H$. Moreover, let $\D_H$ denote the closure of $\Cyl^1_H(\G)$ in $\D=\Dom(\EE)$
w.r.t. the norm $(\EE+\|.\|^2_{L^2(\Q^\beta)})^{1/2}$. Then  $(\EE, \D_H)$ is  a  -- not necessarily densely defined -- Dirichlet form on $L^2(\G,\Q^\beta)$.

\smallskip

More precisely,  let $\mathbb V_H$ denote the closure of $\D_H$ in $L^2(\G,\Q^\beta)$.
Then $(\EE, \D_H)$ is a closed quadratic form in $\mathbb V_H$.
As usual, there exist a strongly continuous semigroup $(\mathbb T^H_t)_{t\ge0}$ and a resolvent $(\mathbb G_\alpha^H)_{\alpha>0}$, both consisting of Markovian operators on  $\mathbb V_H$. Let $\pi_H: L^2(\G,\Q^\beta)\to\mathbb V_H$ be the orthogonal projection onto the closed linear subspace $\mathbb V_H$. Then a semigroup on $L^2(\G,\Q^\beta)$ -- not necessarily strongly continuous, however -- can be constructed by
\begin{equation}\label{Tthat}\hat{\mathbb T}^H_t:=\mathbb T^H_t\circ\hat{\pi}_H.
\end{equation}
The projection $\hat{\pi}_Hu$ of $u\in L^2(\G,\Q^\beta)$ can be characterized as the conditional expectation
$$\hat{\pi}_H u(g)=\int_\G u(\tilde g)\,\Q^\beta\left( d \tilde g \left| \left\{\langle \tilde g,\varphi\rangle= \langle  g,\varphi\rangle \mbox{ for all }\varphi\in H\right\}\right.\right)$$
of the
random variable $u: \tilde g\mapsto u(\tilde g)$ on $\G$ under the condition
$\left\{\langle \tilde g,\varphi\rangle= \langle  g,\varphi\rangle \mbox{ for all }\varphi\in H\right\}$.
\end{subsection}

\begin{subsection}{Monotone Convergence}

Let $(H(k))_{\in\mathbb N}$ be an increasing family of linear subspaces with  $L^2(\eI,\leb)=\overline{\bigcup_k H(k)}$ and define $\D_{H(k)}$ as above.
Then $\D_{H(k)}\nearrow$ with $\overline{\bigcup_k \D_{H(k)}}=\D$. In particular,
$$(\EE,\D_{H(k)}) \to (\EE,\D)\quad\mbox{in the sense of Mosco for }k\to\infty.$$
Hence, if $\hat{\mathbb T}^{H(k)}_t$ and $\hat{\mathbb G}^{H(k)}_\alpha$ denote the semigroup and resolvent operators on $L^2(\G,\Q^\beta)$ associated with $(\EE,\D_{H(k)})$ and if $\mathbb T_t$ and $\mathbb G_\alpha$ denote the corresponding  operators associated with $(\EE,\D)$ then
$$\hat{\mathbb T}^{H(k)}_t\to \mathbb T_t,\quad \hat{\mathbb G}^{H(k)}_\alpha \to \mathbb G_\alpha\quad\mbox{strongly in $L^2(\G,\Q^\beta)$ as }k\to\infty,$$
cf. \cite{ReeSim}.

\end{subsection}

\begin{subsection}{Isomorphisms I}

Let $H$ be finite dimensional with basis $\HH=\{\varphi^{(1)},\ldots,\varphi^{(k)}\}$ and consider the map
$$\hat\J_\HH:  \ L^2(\eI,\leb)\to {\mathbb R}^k,\quad
g\mapsto\left(\langle g,\varphi^{(1)}\rangle,\ldots,\langle g,\varphi^{(k)}\rangle\right).$$
Its restriction to $H$ --  denoted by $\J_\HH$ --  is a vector space isomorphism
with
$\J_\HH^{-1}:\ {\mathbb R}^k\to H,\quad x\mapsto \sum_{i,j=1}^k x_i a^{-1}_{ij}\varphi^{(j)}$
where $(a^{-1}_{ij})$ denotes the inverse of the matrix $(a_{ij})$ defined by
$a_{ij}=\langle \varphi^{(i)},\varphi^{(j)}\rangle$. This map induces an isomorphism between ${\mathcal C}^1(\mathbb{R}^k)$ and $\Cyl^1_H(\G)$:
$$U\in{\mathcal C}^1(\mathbb{R}^k)\stackrel{U=u\circ \J_\HH^{-1}}{\longleftrightarrow}u\in\Cyl^1_H(\G).$$
Let $m_\HH^\beta$ denote the distribution of the random vector $\left(\langle g,\varphi^{(1)}1\rangle,\ldots,\langle g,\varphi^{(k)}\rangle\right)$, that is,
$m_\HH^\beta:=(\hat\J_\HH)_*\Q^\beta$
and define a pre-Dirichlet form on $L^2(\mathbb{R}^k,m^\beta_\HH)=\left\{u\circ \J_\HH^{-1}:\ u\in {\mathbb V}_{H}\right\}$ by
\begin{equation}\label{pre-dir-k}\mathcal{E}_\HH(U,V):=\sum_{i,j=1}^k a_{ij}\int_{{\mathbb R}^k}\partial_i U(x)\partial_j V(x)\, dm^\beta_\HH(x)
\end{equation}
for $U,V\in {\mathcal C}^1(\mathbb{R}^k)$.
  This form  is closable -- since the closable form $(\EE,\Cyl^1_H(\G))$ is isomorphic to it -- with closure being a  strongly local Dirichlet form
on $L^2(\mathbb{R}^k,m_\HH^\beta)$ with domain
$${\mathcal D}_\HH=\left\{u\circ \J_\HH^{-1}:\ u\in {\mathbb D}_{H}\right\}$$
and with
$$\mathcal{E}_\HH(U,V)=\mathbb E(U\circ \hat\J_\HH,V\circ \hat\J_\HH)$$
for $U,V\in{\mathcal D}_\HH$, cf.
\cite{FukOshTak}.

\medskip

Let  $(T^\HH_t)_{t>0}$  denote the semigroup  associated with $(\mathcal E_\HH,\mathcal D_\HH)$. Then for all $u\in \mathbb V_H\subset L^2(\G,\Q^\beta)$
\begin{equation}\label{iso-1}
\mathbb T_t^Hu=\left(T_t^\HH U\right) \left(\hat\J_\HH\right)
\end{equation}
with $U\in L^2(\mathbb{R}^k,m_\HH^\beta)$
such that $u=U\circ \hat\J_\HH$.

\end{subsection}
\begin{subsection}{Standard Approximations}
For each $k\in\N$ let us from now on fix the linear subspace $H(k)\subset L^2([0,1],\leb)$ spanned by the orthogonal system
 $\HH(k)=\{\varphi_k^{(1)},\ldots,\varphi_k^{(k)}\}$ with
$$\varphi_k^{(i)}(t):=k\cdot 1_{(\frac{i-1}k\frac i k]}(t).$$
To simplify notation, write $m_k^\beta, \J_k, \E_k, T^k_t$ etc. instead of $m_{\HH(k)}^\beta, \J_{\HH(k)}, \E_{\HH(k)}, T^{\HH(k)}_t$, resp.

Note that in this case
\begin{equation*}\hat\J_k(g)=\left( k\int_{0}^{\frac 1 k}g(t)dt,\ldots,k\int_{\frac{k-1}k}^{1}g(t)dt \right)=\left( \int_0^1\Phi_k^{(1)}dg,\ldots,\int_0^1\Phi_k^{(k)}dg\right)\end{equation*}
with $\Phi_k^{(i)}$ as introduced in (\ref{Phi}).
Hence,the measure
$m_k^\beta:=(\hat\J_k)_*\Q^\beta$ on $\R^k$ coincides with the measure investigated in detail in the previous chapter. In particular,
$$dm_k^\beta(x)=\rho_k^\beta(x)\,dx$$
with $\rho_k^\beta$ given by formula (\ref{rho}). Recall that $\rho_k^\beta$ is continuous and $>0$ on the open simplex $\Sigma_k\subset\R^k$ and that it vanishes on $\R^k\setminus\Sigma_k$.

\medskip

The Dirichlet form $(\mathcal E_k,\mathcal D_k)$ on $L^2(\R^k,\rho_k^\beta)$ is given explicitly on its core
$\mathcal C^1(\R^k)$
by
\begin{equation}\label{Dir-endlich}\mathcal{E}_k(U,V)=k\int_{{\mathbb R}^k}\nabla U(x)\cdot\nabla V(x)\, dm^\beta_k(x)\end{equation}
with $\nabla U$ denoting the gradient of $U$ on $\R^k$. If we regard it as a Dirichlet form on $L^2(\overline{\Sigma_k},\rho_k^\beta)$ then it is {\em regular, strongly local and recurrent.} (Indeed, $\{u|_{\overline{\Sigma_k}}: u\in\mathcal C^1(\R^k)\}$ is dense in $\mathcal C(\overline{\Sigma_k})$ as well as in $\mathcal D_k$. Strong locality and recurrence is inherited from $(\EE,\D)$.)

The semigroup $(T_t^k)_{t\ge0}$ associated with $(\mathcal E_k,\mathcal D_k)$ can be represented
as
\begin{equation}\label{TtandXt}
T_t^ku(x)=\mathbb E_x\left[u\left(X_t^k\right)\right]
\end{equation}
(for all Borel functions $u\in L^2(\overline{\Sigma_k},\rho_k^\beta)$ and a.e. $x\in\overline{\Sigma_k}$)
in terms
of
 a strong Markov process
$$(X^k_t)_{t\ge0}=\left(X^{k,1}_t,\ldots,X^{k,k}_t\right)_{t\ge0}$$ with state space $\overline{\Sigma_k}$, defined on some probability space
$(\Omega,\mathcal F, \mathbf P_x)_{x\in\overline{\Sigma_k}}$ and canonically associated with $(\mathcal E_k,\mathcal D_k)$. This process is continuous, recurrent and reversible w.r.t. $m_k^\beta$. At least on those stochastic intervals for which $X^k_t(\omega)\in\Sigma_k$ it can be characterized as the solution to an interacting system of stochastic differential equations
\begin{equation}
dX_t^{k,i}=k\frac{\partial \log\rho_k^\beta}{\partial x_i}\left(X_t^k \right)dt+\sqrt{2k}\,dW_t^i,\quad i=1,\ldots,k
\end{equation}
for some $k$-dimensional Brownian motion $(W_t)_{t\ge 0}$.

\end{subsection}

\begin{subsection}{Isomorphisms II}

Let $\G_k:=\G\cap H(k)$ denote the subset of those $g\in\G$ which are constant on each of the intervals $[\frac{i-1}k,\frac i k)$ for $i=1,\ldots,k$. Then
$$\J^{-1}_k:\overline{\Sigma_k}\to\G_k, \ x\mapsto \sum_{i=1}^k x_i\cdot 1_{[\frac{i-1}k,\frac i k)}$$
is a bijection. It maps the strong Markov process $(X^k_t)_{t\ge0}$ on $\overline{\Sigma_k}$ onto a strong Markov process $(g_t^k)_{t\ge0}$ on $\G_k$ with
\begin{equation}\label{gt,Xt}
g_t^k(\omega):=\J^{-1}_k\left(X^k_t(\omega)\right)=\sum_{i=1}^k X_t^{k,i}(\omega)\cdot 1_{[\frac{i-1}k,\frac i k)}.\end{equation}

\bigskip

Now recall that the Hilbert space $\mathbb V_k:=\overline{\Cyl^1_k(\G)}^{L^2(\G,\Q^\beta)}$ coincides with $\left\{U\circ \J_k:\ U\in L^2(\mathbb{R}^k,m^\beta_k)\right\}$.
Hence, (\ref{iso-1}) together with (\ref{TtandXt}) and (\ref{gt,Xt}) imply
\begin{equation}\label{Tt,gt,Xt}
\mathbb T_t^ku(g)=\mathbf E_{g}\left[u\left(g_t^k\right)\right]=\mathbf E_{\J_k(g)}\left[u\left(\sum_{i=1}^k X_t^{k,i}\cdot 1_{[\frac{i-1}k,\frac i k)}\right)\right]
\end{equation}
for all Borel functions $u\in\mathbb V_k$ and a.e. $g\in\G$.
Finally, according to (\ref{Tthat})
\begin{equation}\label{Tt,gt,Xt2}
\hat{\mathbb T}_t^ku(g)=\mathbf E_{\J_k(g)}\left[u_k\left(\sum_{i=1}^k X_t^{k,i}\cdot 1_{[\frac{i-1}k,\frac i k)}\right)\right]
\end{equation}
for all Borel functions $u\in L^2(\G,\Q^\beta)$ and a.e. $g\in\G$ with
$u_k=\hat{\pi}_k u$ being the projection of $u$ onto $\mathbb V_k$ (or, in other words, the conditional expectation of $u$).

\bigskip

This process canonically extends to a -- not necessarily normal --  strong Markov process  $(g_t^k)_{t\ge0}$ on $\G$, projecting the initial data by means of the map
$$\pi_k:=\J_k^{-1}\circ \hat\J_k: \G\to\G_k, \ g\mapsto \frac1k\sum_{i=1}^k \langle g,\varphi_k^{(i)}\rangle \varphi_k^{(i)}.$$

\end{subsection}

\begin{subsection}{Isomorphisms III}

Let $\Pe_k$ denote the subset of $\mu\in\Pe$ which can be represented as $\mu=\frac1k\sum_{i=1}^k \delta_{x_i}$ for suitable $x_1,\ldots,x_k\in[0,1]$.
The maps
$\chi:\G_k\mapsto \Pe_k$
and $\I_k:=\J_k\circ \chi^{-1}: \Pe_k\to \overline \Sigma_k$ establish  canonical isomorphisms. The inverse of the latter
$$\I_k^{-1}:x\mapsto \frac1k\sum_{i=1}^k \delta_{x_i}$$
defines the canonical embedding of $\overline \Sigma_k$ into $\Pe$.
On the other hand, the map
$$\hat\I_k:=\hat\J_k\circ \chi^{-1}:\Pe\to \overline \Sigma_k$$
can be characterized as follows: Each $\mu\in\Pe$ can be represented uniquely as $\mu=\frac1k\sum_{i=1}^k\mu_i$ with probability measures $\mu_i$ supported on $[y_{i-1},y_i]$ for suitable $0\le y_1\le\ldots\le y_k\le1$. (Indeed, $y_i=\inf\{t\ge0: \mu([0,t])>\frac ik$ for each $i=1,\ldots,k$.)
Then
$$\hat\I_k(\mu)=\left(x_1,\ldots,x_k\right)$$ with $x_i=x_i^\mu=\int_0^1t\,d\mu_i(t)$
being the mean value of the probability measure $\mu_i$.

In particular, the projection
$\pi_k=\I_k^{-1}\circ\hat\I_k:\Pe\to\Pe_k$ is defined by
$$\mu\mapsto\frac1k\sum_{i=1}^k \delta_{x_i^\mu}.$$

\bigskip

Let $(\mu_t^k)_{t\ge0}$ be the image of the strong Markov process $(g_t^k)_{t\ge0}$ under the bijection $\chi: g\mapsto g_*\leb|_{[0,1]}$. Then
$$
\mu_t^k(\omega)=\frac1k\sum_{i=1}^k\delta_{X_t^{k,i}(\omega)}.$$
In other words, the strong Markov process $(\mu_t^k)_{t\ge0}$ on $\Pe_k$ is the empirical distribution of the strong Markov process $(X^k_t)_{t\ge0}$ on $\overline{\Sigma_k}$.

\bigskip

Finally, a probabilistic
representation -- similar to that for $\left(\hat{\mathbb T}_t^k\right)_{t\ge0}$ -- also holds true for the semigroup
$\left(\hat{\mathbb T}_{\Pe,t}^k\right)_{t\ge0}$ associated with the Dirichlet form $(\EE_\Pe,\D_\Pe)$
 on $L^2(\Pe,\Pp^\beta)$:
\begin{equation}\label{Tt,mut,Xt}
\hat{\mathbb T}_{\Pe,t}^ku(\mu)=\mathbf E_{x_\mu}\left[u_k\left(\frac1k\sum_{i=1}^k\delta_{X_t^{k,i}}\right)\right]
\end{equation}
for all Borel functions $u\in L^2(\Pe,\Pp^\beta)$ and a.e. $\mu\in\Pe$ and with $x_\mu:=\I_k(\mu)$.

\end{subsection}
\end{section}
\begin{section}{Convergence}

\begin{subsection}{Convergence of Finite Dimensional Distributions}
Note that $H(2^k)\subset H^(2^n)$ for $k,n\in\N$, $k\le n$, and thus $\D^{2^k}\subset\D^{2^n}$, $\mathbb V^{2^k}\subset\mathbb V^{2^n}$.
According to section 5.1
\begin{equation}
\mathbb T_t^{2^k}u\to \mathbb T_tu\quad\mbox{in }L^2(\G,\Q^\beta)\qquad\mbox{as }k\to\infty\end{equation}
for all $u\in \mathbb V^\infty:=\bigcup_{n\in\N}\mathbb V^{2^n}$. The latter is a dense subset in $L^2(\G,\Q^\beta)$.
The previous in particular implies
\begin{equation}
\langle u,\mathbb T_t^{2^k}v\rangle_{L^2(\G,\Q^\beta)}\to\langle u, \mathbb T_tv\rangle_{L^2(\G,\Q^\beta)}\qquad\mbox{as }k\to\infty\end{equation}
for all $u,v\in \mathbb V^\infty$
and thus
\begin{equation}
\mathbf E_{\Q^\beta_k}\left[u(g_0^{2^k})\cdot v(g_t^{2^k})\right]\to\mathbf E_{\Q}\left[u(g_0)\cdot v(g_t)\right]\qquad\mbox{as }k\to\infty\end{equation}
for all $u,v\in \mathcal C(\G)$.

\bigskip

The Markov property of the processes $(g_t)_{t\ge0}$ and $(g_t^{2^k})_{t\ge0}$ together with their invariance w.r.t. the measures $\Q^\beta$ and $\Q^\beta_{2^k}$ allows to iterate this argumentation which then yields
\begin{eqnarray*}
\lefteqn{
\mathbf E_{\Q^\beta_{2^k}}\left[u_1(g_{t_1}^{2^k})\cdot u_2(g_{t_2}^{2^k})\cdot \ldots\cdot u_N(g_{t_N}^{2^k})\right]}\\
&=&
\int_\G u_1\cdot \mathbb T^{2^k}_{t_1-t_0}\left(u_2\cdot  T^{2^k}_{t_2-t_1}\left(u_3\cdot \ldots\cdot  T^{2^k}_{t_N-t_{N-1}}u_N\right)\ldots\right)\, d\Q^\beta_{2^k}\\
&&\qquad\qquad\downarrow\\
&=&
\int_\G u_1\cdot \mathbb T_{t_1-t_0}\left(u_2\cdot  T_{t_2-t_1}\left(u_3\cdot \ldots\cdot  T_{t_N-t_{N-1}}u_N\right)\ldots\right)\, d\Q^\beta\\
&=&
\mathbf E_{\Q^\beta}\left[u_1(g_{t_1})\cdot u_2(g_{t_2})\cdot \ldots\cdot u_N(g_{t_N})\right]
\end{eqnarray*}
as $k\to\infty$
for all $N\in \N$, all $0\le t_1<\ldots<t_N$ and all $u_1,\ldots, u_N\in \mathcal C(\G)$.
Since functions $U\in\mathcal C(\G^N)$ can be approximated uniformly by linear combinations of functions of the form $U(g_1,g_2,\ldots,g_n)=\prod_{n=1}^Nu_n(g_n)$ it follows that
\begin{eqnarray*}
\mathbf E_{\Q^\beta_{2^k}}\left[U(g_{t_1}^{2^k}, g_{t_2}^{2^k}, \ldots,g_{t_N}^{2^k})\right]
&\to&
\mathbf E_{\Q^\beta}\left[U(g_{t_1}, g_{t_2}, \ldots,g_{t_N})\right]
\end{eqnarray*}
as $k\to\infty$
for all $N\in \N$, all $0\le t_1<\ldots<t_N$ and all $U\in \mathcal C(\G^N)$.
That is, we have proven the convergence
\begin{equation}\label{conv-fdd}
(g_t^{2^k})_{t\ge0}\ \to \ (g_t)_{t\ge0}\qquad\mbox{as }k\to\infty
\end{equation}
in the sense of weak convergence of the finite dimensional distributions of the processes, started with their respective invariant distributions.
By means of the various isomorphisms presented before, this can be equivalently restated as convergence
\begin{equation}
(\mu_t^{2^k})_{t\ge0}\ \to \ (\mu_t)_{t\ge0}\qquad\mbox{as }k\to\infty,
\end{equation}
again in the sense of weak convergence of the finite dimensional distributions of the processes, started with their respective invariant distributions.
Here $(\mu_t)_{t\ge0}$ denotes the Wasserstein diffusion on $\Pe$ -- associated with the Dirichlet form (\ref{Wasserstein-P}) -- with the entropic measure $\Pp^\beta$ as invariant distribution and
$$\mu_t^{2^k}(\omega)=\frac1{2^k}\sum_{i^=1}^{2^k}\delta_{X_t^{2^k,i}(\omega)}$$
with $\left(X_t^{2^k,i}\right)_{t\ge0}$ being the continuous Markov process on the simplex $\overline{\Sigma}_{2^k}$ -- associated with the Dirichlet form (\ref{Dir-endlich}) --  with invariant distribution $\rho^\beta_{2^k}(x)dx$.
\end{subsection}

\begin{subsection}{Convergence of Processes}
Convergence of the processes
\begin{equation*}
(g_t^{2^k})_{t\ge0}\ \to \ (g_t)_{t\ge0}\qquad\mbox{as }k\to\infty
\end{equation*}
will follow from the convergence (\ref{conv-fdd}) of the respective finite dimensional distributions  provided we prove tightness of the family $(g_t^{2^k})_{t\ge0}, k\in\N$ in $\C(\R_+,\G)$. The latter is equivalent to tightness of $\left(\langle\psi,g_t^{2^k}\rangle\right)_{t\ge0}, k\in\N$ in $\C(\R_+,\R)$ for all $\psi\in L^2([0,1],\leb)$. It suffices to verify this for a dense subset of $\psi$, e.g. for all $\psi\in\bigcup_{l=1}^\infty \HH(2^l)\subset L^2([0,1],\leb)$.

\smallskip

Fix $\psi\in\HH(2^l)$ for some $l\in\N$ with $\|\psi\|=1$. For each $k\in\N, k\ge l$ the continuous function $u(g):=\langle\psi,g\rangle$ lies in $\mathbb V_{2^k}$ with energy $\EE(u)=\|\psi\|^2=1$ and square field operator
\begin{equation}\label{Gamma}
\Gamma_{\langle u\rangle}(g)=1
\end{equation}
for a.e. $g\in\G$.

\smallskip

Given $T>0$, the process
$$\left(u(g_t^{2^k})\right)_{t\in[0,T]}$$
admits a Lyons-Zheng decomposition
$$u(g_t^{2^k})-u(g_0^{2^k})=\frac12 M_t^{(2^k)}-\frac12\left[ M_T^{(2^k)}- M_{T-t}^{(2^k)}\right]\circ r_T$$
into a forward martingale and a backward martingale. According to (\ref{Gamma}) the quadratic variation of the forward martingale -- as well as that of the backward martingale --  is given by
$$\langle M^{(2^k)}\rangle_t=t,$$
uniformly in $g\in\G$ and $k\in\N, k\ge l$. Hence, using hitting probabilities of 1-dimensional Brownian motions we deduce for any $R>0$ and uniformly in $k\in\N, k\ge l,$
\begin{eqnarray*}
\lefteqn{
\mathbf P_{\Q_{2^k}^\beta}\left[\sup_{t\in[0,T]}\left(u(g_t^{2^k})-u(g_0^{2^k})\right)>R\right]}\\
&\le&
\mathbf P_{\Q_{2^k}^\beta}\left[\sup_{t\in[0,T]}M_t^{(2^k)}>R\right]+
\mathbf P_{\Q_{2^k}^\beta}\left[\sup_{t\in[0,T]}\left(M^{(2^k)}_T-M^{(2^k)}_{T-t}\right)\circ r_T>R\right]\\
&\le&
2\sqrt{\frac2\pi} \exp\left(-\frac{(R/2)^2}{2T}\right) .
\end{eqnarray*}
Since we already know that the 1-dimensional distributions $g_0^{2^k}$ converge, this proves tightness of the family of processes
$$\left(u(g_t^{2^k})\right)_{t\in[0,T]}=\left(\langle\psi,g_t^{2^k}\rangle\right)_{t\in[0,T]}$$
for $k\in\N$.
Since this holds for all $\psi\in \bigcup_{l=1}^\infty \HH(2^l)$ it implies tightness of the family $(g_t^{2^k})_{t\ge0}, k\in\N$, and thus convergence
of the processes
\begin{equation*}
(g_t^{2^k})_{t\ge0}\ \to \ (g_t)_{t\ge0}\qquad\mbox{as }k\to\infty.
\end{equation*}

\smallskip

Applying the usual isomorphism, this may be restated as convergence of the processes
\begin{equation*}
(\mu_t^{2^k})_{t\ge0}\ \to \ (\mu_t)_{t\ge0}\qquad\mbox{as }k\to\infty
\end{equation*}
in $\C(\R_+\Pe)$.
\end{subsection}
\begin{subsection}{Final Remarks}
Given $k\in\N$ a mapping $\tilde \J_k:\G\to\Sigma_k$ -- very similar to our mapping $\hat \J_k$ from (4.1) -- is obtained by replacing the functions $\Phi_k^{(i)}$ from (4.2) by
$\tilde\Phi_k^{(i)}(x):=1_{[0,\frac{2i-1}{2k}]}(x)$ which leads to
$$\tilde\J_k(g)=\left(\int_0^1\tilde\Phi_k^{(i)}\,dg\right)_{i=1,\ldots,k}=\left(g\left(\frac{2i-1}{2k}\right)\right)_{i=1,\ldots,k}.$$
In this case, the identification of the push forward measure $\tilde m_k^\beta:=(\tilde\J_k)_*\Q^\beta$ on $\Sigma_k$ is much easier. Indeed,
it is absolutely continuous with density
$$\tilde \rho_k(x)=C\cdot
\left[x_1(1-x_k)\right]^{\beta/(2k)-1}\cdot\prod_{i=2}^{k}\left(x_i-x_{i-1}\right)^{\beta/k-1}.$$
The strong Markov process on $\overline\Sigma_k$ associated with the Dirichlet form
$\tilde{\mathcal E}_k(U)=k\int_{\Sigma_k}|\nabla U|^2\tilde \rho_k^\beta\,dx$ on $L^2(\Sigma_k,\tilde\rho_k^\beta\,dx)$ admits a very explicit characterization: at least on those stochastic intervals on which the process is in the interior of the simplex it is a weak solution to the
coupled system of stochastic differential equations
\begin{equation}
dX_t^{k,i}=
\left[\frac{\beta_{i-1}-k}{X_t^{k,i-1}-X_t^{k,i}}-\frac{\beta_i-k}{X_t^{k,i}-X_t^{k,i+1}}\right]dt +  \sqrt{2k}\, dW_t^i,\quad i=1,\ldots,k
\end{equation}
for some $k$-dimensional Brownian motion $(W_t)_{t\ge 0}$ and with $X_t^{k,0}:=0, X_t^{k,k+1}:=1$. Here
$\beta_0=\beta_k=\beta/2$ and $\beta_i=\beta$ for $i=1,\ldots,k-1$.
This is essentially the approximation used by S. Andres and M.-K- von Renesse \cite{AndRes}.

The fundamental disadvantage, however, is that the functions $g\mapsto \int_0^1\tilde\Phi_k^{(i)}\,dg$ are no longer in the domain of the Dirichlet form $\EE$. More generally, for any non-constant $U\in\C^1(\R^k)$ the function
$u(g):=U(\tilde\J_k(g))$ is neither continuous on $\G$ nor does it belong to $\D$.
\end{subsection}
\end{section}

\bibliographystyle{alpha}

\def\cprime{$'$} \def\cprime{$'$}

\end{document}